\newtheorem{theorem}{Theorem}%[section]
\newtheorem{proposition}[theorem]{Proposition}
\newtheorem{lemma}[theorem]{Lemma}
\newtheorem{cor}[theorem]{Corollary}
\newtheorem{definition}[theorem]{Definition}
\newtheorem{remark}[theorem]{Remark}
\def\operatorname#1{\mathop{\operator@font #1}\nolimits}%
\newcommand{\s}{s}
\newcommand{\R}{\mathbb{R}}
\newcommand{\C}{\mathbb{C}}
\newcommand{\half}{{\tfrac{1}{2}}}
\newcommand{\Id}{\operatorname{Id}}
\newcommand{\id}{\operatorname{Id}}
\newcommand{\Sp}{\operatorname{Sp}}
\newcommand{\card}{\operatorname{Card}}
\newcommand{\Ker}{\operatorname{Ker}}
\newcommand{\im}{\operatorname{Im}}
\newenvironment{proof}[1][{}]{{ \textsc{Proof{#1}:~}}}{{\hspace*{\fill}$\square$\\}}
\def\adots{\mathinner{\mkern2mu\raise 1pt\hbox{.}\mkern 3mu\raise
4pt\hbox{.}\mkern2mu\raise 8pt\hbox{{.}}}}
\title{Normal Forms for  Symplectic Matrices}
\author{Jean Gutt \\
	\small\hbox{\parbox[t]{1.9in}{\begin{center}
	D{\'e}partement de Math{\'e}matique \\
	Universit{\'e} Libre de Bruxelles \\
	Campus Plaine, CP 218 \\
	Boulevard du Triomphe \\
	B-1050 Bruxelles \\
	Belgium\\
	jeangutt\char64ulb.ac.be\end{center}}}
	\hbox{\parbox[t]{.6in}{\begin{center}\rm and \end{center}}}
	\hbox{\parbox[t]{1.9in}{\begin{center}
	Universit\'e de Strasbourg\\
	IRMA\\
	7 rue Ren\'e Descartes\\
	67000 Strasbourg\\
	France\\
	{gutt\char64math.unistra.fr}\end{center}}}
	}
\date{} % Delete this line to display the current date
\begin{document} 

\maketitle

\begin{abstract}
	We give  a self contained and elementary description
	of normal forms for symplectic matrices,  based on  geometrical considerations.
	The normal forms in question are expressed in terms of elementary Jordan 
	matrices and integers with values in $\{ -1,0,1\}$ related to signatures of 
	quadratic forms naturally associated to the symplectic matrix.
\end{abstract}

%%%%%%%%%%%%%%%%%%%%%%%%%
\section*{Introduction}

Let  $V$ be a real vector space of dimension $2n$ with a non degenerate skewsymmetric bilinear form $\Omega$.
The symplectic group $\Sp(V,\Omega)$ is the set of linear transformations of $V$ which preserve $\Omega$:
\[
	\Sp(V,\Omega) = \left\{ \, A: V \rightarrow V\,\vert\, A \mbox{ linear and }\Omega(Au,Av)=\Omega(u,v) \, {\textrm{for all }}  u,v  \in V\,\right\}.
\]
	A \emph{symplectic basis} of the  symplectic vector space $(V,\Omega)$ of dimension $2n$ is a basis $\{ e_1, \ldots ,e_{2n} \}$
	in which the matrix representing the symplectic form is $\Omega_0 = 
	\left(\begin{smallmatrix}
		0 & \Id\\
		-\Id & 0
	\end{smallmatrix}\right)$.
In a symplectic basis, the matrix  $A'$ representing an element $A\in \Sp(V,\Omega)$ belongs to 
$$
\Sp(2n,\R)= \left\{ \, A'\in Mat(2n\times 2n,\R)\,\vert\, A^{'\tau} \Omega_0 A'=\Omega_0\,\right\}
$$
where $(\cdot)^\tau$ denotes the transpose of a matrix.\\ 
Given an element $A$ in the symplectic group $\Sp(V,\Omega)$, we want to find a symplectic basis of $V$ in which the matrix $A'$ representing $A$ has a  distinguished form; to give a \emph{normal form} for matrices in $\mathrm{Sp}(2n,\mathbb R)$ means to describe a distinguished representative in each conjugacy class.
In general, one cannot find a symplectic basis of the complexified vector space for which the matrix representing  $A$ has Jordan normal form. 

The normal forms considered here are expressed in terms of elementary Jordan matrices and matrices depending on an integer $\s \in \{-1,0,1\}$.
They are closely related to the forms given by Long in \cite{LD00, Lon02} ;
the main difference is that,  in those references, some indeterminacy was left in the choice of matrices in each conjugacy class, in particular when the matrix admits $1$ as an eigenvalue.
We speak in this case of \emph{quasi-normal forms}.
Other constructions can be found in \cite{Wim91, LM74, LinMehrXu99, SPen72, MulThi99} but they are either quasi-normal or far from Jordan normal forms. Closely related are the constructions of normal forms
for real matrices that are selfadjoint, skewadjoint or unitary with respect to an indefinite inner product where sign characteristics are introduced; they have been  studied in many sources; for instance -mainly for selfadjoint  and skewadjoint matrices- in the monograph of I. Gohberg, P. Lancaster and L. Rodman \cite{GLR}, and for unitary matrices in the papers \cite{YLR,GR,Mehl,Rod}.
 Normal forms for symplectic matrices have been  given by C. Mehl in \cite{Mehl2} and by  V. Sergeichuk in \cite{Serg} ; 
 in those descriptions, the basis producing the normal form is not required to be symplectic.
 
We construct here normal forms using elementary geometrical methods. 

The choice of representatives for normal (or quasi normal) forms of matrices depends on the application one has in view.
Quasi normal forms were used by Long to get precise formulas for indices of iterates of Hamiltonian orbits in \cite{Long}.
The forms obtained here were useful for us to give new characterisations of Conley-Zehnder indices of general paths of symplectic matrices \cite{JGutt}. We have chosen to give a normal form in
a symplectic basis. The main interest of our description is the natural interpretation of the signs appearing in the decomposition, and the description of the decomposition
for  matrices with $1$ as an eigenvalue. It also yields
an  easy natural characterization of   the conjugacy class of an element in $\Sp(2n,\R)$.
We hope it can be useful in other situations.\\

Assume that $V$ decomposes as a direct sum $V=V_1\oplus V_2$ where $V_1$ and $V_2$ are $\Omega$-orthogonal $A$-invariant subspaces.
Suppose that $\{e_1,\ldots,e_{2k}\}$ is a symplectic basis of $V_1$ in which the matrix representing $A\vert_{V_1}$ is
$A'=\left(\begin{smallmatrix}
	A'_1 & A'_2\\
	A'_3 & A'_4
\end{smallmatrix}\right)$.
Suppose also that $\{f_1,\ldots,f_{2l}\}$ is a symplectic basis of $V_2$ in which the matrix representing $A\vert_{V_2}$ is
$A''=\left(\begin{smallmatrix}
	A''_1 & A''_2\\
	A''_3 & A''_4
\end{smallmatrix}\right)$.
Then $\{e_1,\ldots,e_k,f_1,\ldots,f_l,e_{k+1},\ldots,e_{2k},f_{l+1},\ldots,f_{2l}\}$ is a symplectic basis of $V$ and the matrix representing $A$ in this basis is
$$
	\left(\begin{array}{cccc}
		A'_1 & 0 & A'_2 & 0\\
		0 & A''_1 & 0 & A''_2\\
		A'_3 & 0 & A'_4 & 0\\
		0 & A''_3 & 0 & A''_4
	\end{array}\right).
$$
The notation $A'\diamond A''$ is used in Long \cite{Long} for this matrix.
It is ``a direct sum of matrices with obvious identifications".
We call it the \emph{symplectic direct sum} of the matrices $A'$ and $A''$.\\

We $\C$-linearly  extend  $\Omega$ to  the complexified vector space $V^\C$ and we $\C$-linearly extend any $A\in \Sp(V,\Omega)$ to  $V^\C.$ 
If $v_\lambda$ denotes an eigenvector of $A$ in $V^\C$ of the eigenvalue $\lambda$, then
$\Omega(Av_\lambda,Av_\mu)=\Omega(\lambda v_\lambda,\mu v_\mu)=\lambda\mu \Omega(v_\lambda,v_\mu),$ thus
$\Omega(v_\lambda,v_\mu)=0$ unless $\mu=\frac{1}{\lambda}.$
Hence the eigenvalues of $A$ arise in ``quadruples''
\begin{equation}
	[\lambda]:=\left\{\lambda,\frac{1}{\lambda},\overline{\lambda},\frac{1}{\overline\lambda}\right\}.
\end{equation}
We find a symplectic basis of $V^\C$ so that $A$ is a symplectic direct sum of block-upper-triangular matrices of the form
$$
				\left(\begin{array}{cc}
					J({\lambda},k)^{-1}&0\\
					0& J({\lambda},k)^{\tau}
				\end{array}\right)
				\left(\begin{array}{cc}
					\Id&D(k,\s)\\
					0& \Id
				\end{array}\right),
$$
 or
$$
\left(\begin{smallmatrix}
					J(\overline{\lambda},k)^{-1}&&&{\text{\large{0}}}\\
					&J({\lambda},k)^{-1}&&\\
					&& J(\overline{\lambda},k)^{\tau}&\\
					{\text{\large{0}}}&&& J({\lambda},k)^{\tau}
				\end{smallmatrix}\right)
				\left(\begin{smallmatrix}
					\,\Id\,&0&0&D(k,\s)\\
					&\,\,\,\, \Id\,\,&D(k,\s)&0\\
					&&\Id&0\\
					{\text{\large{0}}}&&& \Id\,
				\end{smallmatrix}\right),
$$
or
$$
\left(\begin{smallmatrix}
					J(\overline{\lambda},k)^{-1}&&&{\text{\large{0}}}\\
					&J({\lambda},k+1)^{-1}&&\\
					&& J(\overline{\lambda},k)^{\tau}&\\
					{\text{\large{0}}}&&& J({\lambda},k+1)^{\tau}
				\end{smallmatrix}\right)
				\left(\begin{smallmatrix}
					\,\Id\,&0&0&S(k,\s,\lambda)\\
					&\,\,\,\,\Id\,\,&S(k,\s,\lambda)^{\tau}&0\\
					&&\Id&0\\
					{\text{\large{0}}}&&& \Id
				\end{smallmatrix}\right).
$$
Here, $J({\lambda},k)$ is  the elementary $k\times k$ Jordan matrix corresponding to an eigenvalue $\lambda$, $D(k,\s)$ is the diagonal $k\times k$ matrix
\[
	D(k,\s)=\operatorname{diag}(0,\ldots 0,\s),
\]
and $S(k,\s,\lambda)$ is  the $k\times(k+1)$ matrix defined by
\[
	S(k,\s,\lambda):=
	\left(\begin{smallmatrix} 
		0&\ldots& 0&0&0\\
		\vdots &&\vdots&\vdots&\vdots \\
		0&\ldots& 0&0&0\\
		0&\ldots &0&\half is&\lambda is\\
	\end{smallmatrix}\right),
\]
with $\s$ an integer in $\{ -1,0,1\}$.
Each $s\in\{\pm1\}$ is called a sign and the collection of such signs appearing in the decomposition of a matrix $A$ is called the sign characteristic of $A$.

More precisely, on the real vector space $V$, we shall prove:

\begin{theorem}[Normal forms for symplectic matrices]\label{thm:normalform}
	Any symplectic endomorphism $A$ of a finite dimensional symplectic vector space $(V,\Omega)$ is the  direct sum of its restrictions
	$A_{\vert V_{[\lambda]}}$ to the real $A$-invariant symplectic subspace  $V_{[\lambda]}$  whose complexification is
	the direct sum of the generalized eigenspaces of eigenvalues $\lambda,\tfrac{1}{\lambda},\overline{\lambda}$ and $\tfrac{1}{\overline\lambda}$:
	$$
		V_{[\lambda]}^{\C} := E_\lambda\oplus E_{\frac{1}{\lambda}}\oplus E_{\overline\lambda}\oplus E_{\frac{1}{\overline\lambda}}.
	$$
	We  distinguish three cases : $\lambda\notin S^1$, $\lambda=\pm 1$ and $\lambda\in S^1\setminus \{\pm 1\}$.
	
	{\bf{Normal form for $A_{\vert V_{[\lambda]}}$ for $\lambda\notin S^1:$}}\\
	Let $\lambda\notin S^1$ be an eigenvalue of $A$. Let $k:=\dim_\C \Ker (A-\lambda\Id)$ (on $V^\C$) and $q$
	 be the smallest integer so that $(A-\lambda\Id)^{q}$ is identically zero on the generalized eigenspace $E_\lambda$.
	\begin{itemize}
		\item If $\lambda$ is a real eigenvalue of $A$ ($\lambda\notin S^1$ so $\lambda\neq \pm1$),
			there exists a symplectic basis of $V_{[\lambda]}$
			in which the matrix representing the restriction of $A$ to $V_{[\lambda]}$ is a symplectic direct sum of $k$
			matrices of the form 
			$$
				\left(\begin{array}{cc}
					J({\lambda},q_j)^{-1}&0\\
					0& J({\lambda},q_j)^{\tau}
				\end{array}\right)
			$$
			with $q=q_1\ge q_2\ge \dots \ge q_k$   and $J(\lambda,m)$ is the elementary $m\times m$ Jordan matrix associated to $\lambda$
			\begin{equation*}
				J(\lambda,m)=
				\left(\begin{smallmatrix}
					\lambda & 1 &  &  &  &  & \\
					 & \lambda & 1 &  &  &\makebox(0,0) {\rm{0}}  & \\
					 &  & \lambda & 1 &  &  & \\
					 &  &  & \ddots & \ddots &  & \\
					 &\makebox(0,0){\rm{0}}  &  &  & \lambda & 1 & \\
					 &  &   &  &  & \lambda & 1\\
					 &  &  &  &  &  & \lambda
				\end{smallmatrix}\right).
			\end{equation*}
			This decomposition is unique, when $\lambda$ has been
			chosen in $\{ \lambda,\lambda^{-1}\}$. It is determined by the chosen $\lambda$ and by 	
			the dimension $ \dim\bigl(\Ker(A-\lambda\id)^r\bigr)$ for each $r>0$.

		\item If $\lambda=re^{i\phi}\notin(S^1\cup \R)$ is a complex eigenvalue of $A$, there exists a symplectic basis of $V_{[\lambda]}$
			in which the matrix representing the restriction of $A$ to $V_{[\lambda]}$ is a symplectic direct sum of $k$
			matrices of the form
			$$
				\left(\begin{array}{cc}
					J_\R\bigl(re^{-i\phi},2q_j\bigr)^{-1}&0\\
					0&J_\R\bigl(re^{-i\phi},2q_j\bigr)^\tau
				\end{array}\right)
			$$
			with $q=q_1\ge q_2\ge \dots \ge q_k$ and $J_\R(re^{i\phi},k)$ is the $2m\times 2m$ block upper triangular matrix defined by
			\begin{equation*}
				J_\R(re^{i\phi},2m):=
				\left(\begin{smallmatrix}
					R(re^{i\phi}) & \Id &  &  &  &  &\\
					 &R(re^{i\phi}) & \Id &  &  &\makebox(0,0){\rm{0}}  & \\
					 &  & R(re^{i\phi})& \Id &  &  & \\
					 &  &  & \ddots & \ddots &  & \\
					 &\makebox(0,0){\rm{0}}  &  &  & R(re^{i\phi}) &  \Id & \\
					 &  &  &  &  &R(re^{i\phi}) &  \Id \\
					 &  &  &  &  &  & R(re^{i\phi})
				\end{smallmatrix}\right)
			\end{equation*}
 			with $R(re^{i\phi})=\left(\begin{smallmatrix}
					r\cos \phi&-r\sin \phi\\
					r\sin \phi&r\cos \phi
			\end{smallmatrix}\right)$.\\
	This decomposition is unique, when $\lambda$ has been
			chosen in $\{ \lambda,\lambda^{-1}, \overline{\lambda},\overline{\lambda}^{-1}\}$. It is
			 is determined  by the chosen $\lambda$ and  by the dimension $ \dim\bigl(\Ker(A-\lambda\id)^r\bigr)$  for each $r>0$.
       \end{itemize}
	
	{\bf{Normal form for $A_{\vert V_{[\lambda]}}$ for $\lambda=\pm 1:$}}\\
	Let $\lambda=\pm 1$ be an eigenvalue of $A$. 
	There exists a symplectic basis of $V_{[\lambda]}$
	in which the matrix representing the restriction of $A$ to $V_{[\lambda]}$ is a symplectic direct sum of 
	matrices of the form 
	$$
		\left(\begin{array}{cc}
			J(\lambda,r_j)^{-1}&C(r_j,\s_j,\lambda)\\
				0& J(\lambda,r_j)^{\tau}
		\end{array}\right)
	$$
	where $C(r_j,\s_j,\lambda):=J(\lambda,r_j)^{-1} \operatorname{diag}\bigl(0,\ldots,0, \s_j\bigr)$  with $\s_j\in \{0,1,-1\}$.
	If $\s_j=0$,  then $r_j$ is odd.
	The dimension of the eigenspace of  the eigenvalue $\lambda$ is given by $2\card \{j \,\vert\, \s_j=0\}+\card\{j \,\vert \,\s_j\neq0\}$. \\
The  number of $s_j$ equal to $+1$ (resp. $-1$) arising in blocks of dimension $2k$ (i.e. with corresponding $r_j=k$) is equal to the number of positive (resp. negative) eigenvalues of the symmetric $2$-form 
$$\hat{Q}^\lambda_{2k}:\Ker\bigl( (A-\lambda\id)^{2k}\bigr)\times \Ker\left( (A-\lambda\id)^{2k}\right)\rightarrow \R$$
$$(v,w)\mapsto \lambda\, \Omega\bigl((A-\lambda\id)^{k}v,(A-\lambda\id)^{k-1}w\bigr).$$	
The decomposition  is unique up to a permutation of the blocks and
 is  determined by $\lambda$,  by the dimension
$\dim\bigl( \Ker (A-\lambda\Id)^r\bigr)$ for each $r\ge 1$, and  by the rank and the signature of  the
symmetric bilinear $2$-form  $\hat{Q}^\lambda_{2k}$ for each $k\ge 1$. 
	
	{\bf{Normal form for $A_{\vert V_{[\lambda]}}$ for $\lambda\in S^1\setminus \{\pm 1\}:$}}\\
	Let $\lambda \in S^1, \lambda \neq\pm 1$ be an eigenvalue of $A$. 
	There exists a symplectic basis of $V_{[\lambda]}$
	in which the matrix representing the restriction of $A$ to $V_{[\lambda]}$ is a symplectic direct sum of 
	$4k_j\times 4k_j$ matrices ($k_j\ge 1$) of the form 
	\begin{equation}
		\makeatletter
			\setbox\strutbox\hbox{%
			\vrule\@height.5\baselineskip
			\@depth.2\baselineskip
			\@width\z@}
		\makeatother
		\left(\begin{smallmatrix}
			\bigl(J_\R(\overline{\lambda},2k_j)\bigr)^{-1}&\vline&
				\begin{smallmatrix} 
					\begin{smallmatrix}
						0\\
						\vdots\\
						0\strut \\
					\end{smallmatrix}
					&
					\begin{smallmatrix}
						\cdots\\
						\phantom{\vdots}\\
						\cdots\strut
					\end{smallmatrix}&
					\begin{smallmatrix}
						0\\
						\vdots\\
						0\strut \\
					\end{smallmatrix}
					&\s_j\, V_{k_j}^1(\phi) &\s_j\, V_{k_j}^2(\phi)\\
				\end{smallmatrix})\\
			\hline 
			0 \strut&\vline&\strut \bigl(J_\R(\overline{\lambda},2k_j)\bigr)^\tau\strut\\
		\end{smallmatrix}\right)
	\end{equation}

	and    $(4k_j+2)\times (4k_j+2)$ matrices ($k_j\ge 0$) of the form
\begin{equation}
	\makeatletter
		\setbox\strutbox\hbox{%
		\vrule\@height.5\baselineskip
		\@depth.2\baselineskip
		\@width\z@}
	\makeatother
	\left(\begin{smallmatrix}
		\bigl(J_\R(\overline{\lambda},2k_j)\bigr)^{-1}&\vline&\s_j\, U_{k_j}^2(\phi)&\vline&
		 %%%
			\begin{smallmatrix}
				0\strut\\
				\vdots\\ 
				0\strut
			\end{smallmatrix}
			&
			\begin{smallmatrix}
				\cdots\\
				\phantom{\vdots}\\
				\cdots
			\end{smallmatrix}&
			\begin{smallmatrix}
				0\strut\\
				\vdots\\
				0\strut
			\end{smallmatrix}
			&\frac{\s_j}{2} V_{k_j}^2(\phi) &\frac{-\s_j}{2}V_{k_j}^1(\phi)&\vline &U_{k_j}^1(\phi)\\
		\hline
		0\strut&\vline&\cos\phi&\vline&
			0&\ldots & 0& 1 & 0
		&\vline&\s_j\sin\phi\\
		\hline
		0
		&\vline&\begin{smallmatrix}
			0\strut\\
			\vdots\\
			0\strut\\
		\end{smallmatrix}
		&\vline& &&&\bigl(J_\R(\overline{\lambda},2k_j)\bigr)^\tau &&\vline&
		\begin{smallmatrix}
			0\strut\\
			\vdots\\
			0\\
		 \end{smallmatrix} \\
		 \hline
		0\strut&\vline&-\s_j\sin\phi&\vline&
		0&\ldots & 0&0  &-\s_j&\vline
		&\cos\phi\\
	\end{smallmatrix}\right)
\end{equation}
	where $J_\R(e^{i\phi},2k)$ is defined as  above,
	where $  \left(V_{k_j}^1 (\phi) \, V_{k_j}^2(\phi) \right)$ is the $2k_j\times 2$ matrix
	defined by 
	\begin{equation}
		\left(V_{k_j}^1 (\phi) \, V_{k_j}^2(\phi) \right)=
		\left(\begin{matrix} (-1)^{k_j-1}  R(e^{i k_j\phi})\\
			 \vdots\\R(e^{i\phi})
		 \end{matrix}\right)
	\end{equation}
	with $R(e^{i\phi})=\left(\begin{smallmatrix}
		\cos \phi&-\sin \phi\\
		\sin \phi&\cos \phi
	\end{smallmatrix}\right)$, where  
	\begin{equation}
		\left(U^1_{k_j} (\phi) \, U^2_{k_j}(\phi) \right)= \left(V^1_{k_j} (\phi) \, V^2_{k_j}(\phi) \right)\left(R(e^{i\phi})\right)
	\end{equation}
	and  where $\s_j=\pm 1$.
	The complex dimension of the eigenspace of the eigenvalue $\lambda$ in $V^\C$ is given by the number of such matrices.\\
	The number of $s_j$ equal to $+1$ (resp. $-1$) arising in blocks of dimension $2m$  in the normal decomposition
	given above is equal to the number of positive (resp. negative) eigenvalues of the Hermitian $2$-form $\hat{Q}^\lambda_m$ defined on $\Ker\bigl( (A-\lambda\id)^{m}\bigr)$ by:
\[\begin{array}{lll}
\hat{Q}^\lambda_{m} :& \Ker\bigl( (A-\lambda\id)^{m}\bigr)\times \Ker\bigl( (A-\lambda\id)^{m}\bigr)\rightarrow \C\nonumber&\\
&\quad (v,w)\mapsto\frac{1}{ \lambda} \Omega\bigl((A-\lambda\id)^{k}v,(A-\overline{\lambda}\id)^{k-1}\overline{w}\bigr)  &\textrm { if }m=2k\\
&\quad (v,w)\mapsto  i\, \Omega\bigl((A-\lambda\id)^{k}v,(A-\overline{\lambda}\id)^{k}\overline{w}\bigr)  &\textrm { if }m=2k+1.
\end {array}\]
This decomposition is unique up to a permutation of the blocks, when $\lambda$ has been
			chosen in $\{ \lambda, \overline{\lambda} \}$.
			It is  determined by the chosen $\lambda$, by the dimension
$\dim\bigl( \Ker (A-\lambda\Id)^r\bigr)$ for each $r\ge 1$ and  by the rank and the signature of  the
Hermitian bilinear $2$-form  $\hat{Q}^\lambda_m$ for each $m\ge 1$. 
\end{theorem}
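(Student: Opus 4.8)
The plan is to first split $V$ into the subspaces $V_{[\lambda]}$ and then establish the normal form on each one separately. For the splitting I would decompose the complexification $V^\C=\bigoplus_\mu E_\mu$ into generalized eigenspaces and invoke the pairing rule noted above, that $\Omega(E_\mu,E_\nu)=0$ unless $\nu=\tfrac1\mu$. This shows at once that each $V_{[\lambda]}^\C=E_\lambda\oplus E_{1/\lambda}\oplus E_{\overline\lambda}\oplus E_{1/\overline\lambda}$ carries a nondegenerate restriction of $\Omega$, that subspaces attached to distinct quadruples are $\Omega$-orthogonal, and that each $V_{[\lambda]}^\C$ is the complexification of a real $A$-invariant subspace $V_{[\lambda]}$, since the quadruple is stable under complex conjugation. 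The symplectic direct sum construction recalled in the introduction then reduces the theorem to producing the stated form on a single $V_{[\lambda]}$, and the three cases are governed by whether the self-pairing of $E_\lambda$ vanishes.

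For $\lambda\notin S^1$ one has $\lambda^2\neq1$, so $E_\lambda$ is $\Omega$-isotropic and is placed in nondegenerate duality with $E_{1/\lambda}$. Since $J(\lambda,q)^{-1}$ is a single Jordan block for the eigenvalue $1/\lambda$, I would pick a Jordan basis of $E_{1/\lambda}$ in which $A$ acts by the blocks $J(\lambda,q_j)^{-1}$ and then take the $\Omega$-dual basis of $E_\lambda$; since a block-diagonal matrix is symplectic precisely when its second diagonal block is the inverse-transpose of the first, the action on the dual basis is forced to be $J(\lambda,q_j)^{\tau}$, which yields the asserted block. For genuinely complex $\lambda$ I would then descend to the real form, combining the data of $E_\lambda$ and $E_{\overline\lambda}$ and replacing $J$ by $J_\R$. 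Uniqueness in this case is nothing but the uniqueness of the Jordan type, read off from the numbers $\dim\Ker(A-\lambda\id)^r$.

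The heart of the proof, and the main obstacle, is the case $\lambda=\pm1$ together with its Hermitian analogue $\lambda\in S^1\setminus\{\pm1\}$, where $E_\lambda=E_{1/\lambda}$ is itself symplectic and the self-pairing produces the sign characteristic. Writing $N=A-\lambda\id$, I would first record the identity following from $A^{\tau}\Omega_0A=\Omega_0$, namely $\lambda\bigl(\Omega(Nv,w)+\Omega(v,Nw)\bigr)+\Omega(Nv,Nw)=0$, which makes $N$ skew-adjoint up to higher order and ensures that the forms $\hat Q^\lambda_{2k}$ are well defined and symmetric. The plan is then an inductive Witt-type splitting of $E_\lambda$ into $N$-invariant, $\Omega$-nondegenerate indecomposable summands: choosing a generator of a Jordan chain of maximal length and testing the value of $\hat Q^\lambda$ on it, one either splits off a self-dual summand carrying a single Jordan block, which in a suitable symplectic basis takes the displayed form with $\s=\pm1$ equal to the sign of that value, or---when the self-pairing vanishes---splits off a hyperbolic summand built from two Jordan chains of equal length, giving a block with $\s=0$; a parity computation on the pairings $\Omega(N^av,N^bv)$ shows that the hyperbolic summands occur exactly when $r_j$ is odd. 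Passing to the $\Omega$-orthogonal complement and inducting produces the whole decomposition, and counting the positive and negative self-dual blocks of each size identifies the numbers of signs $\pm1$ with the signature of $\hat Q^\lambda_{2k}$; uniqueness follows because the Jordan type together with the ranks and signatures of the $\hat Q^\lambda_{2k}$ are conjugacy invariants. For $\lambda\in S^1\setminus\{\pm1\}$ I would run the same induction over $\C$, the difference being that $E_\lambda$ is now paired with $\overline{E_\lambda}=E_{\overline\lambda}$, so the invariant form $\hat Q^\lambda_m$ is Hermitian rather than symmetric; keeping track of the real structure of $V$ when recombining $E_\lambda$ and $E_{\overline\lambda}$ turns the complex summands into the real $4k_j\times4k_j$ and $(4k_j+2)\times(4k_j+2)$ blocks displayed above, again with signs given by the signature of $\hat Q^\lambda_m$.
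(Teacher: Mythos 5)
Your overall strategy coincides with the paper's: decompose $V$ into the quadruple subspaces $V_{[\lambda]}$ via the pairing rule $\Omega(E_\mu,E_\nu)=0$ for $\mu\nu\neq1$; treat $\lambda\notin S^1$ by placing $E_\lambda$ in nondegenerate duality with $E_{1/\lambda}$ so that \eqref{mattriang} forces the block $\bigl(\begin{smallmatrix}J^{-1}&0\\0&J^\tau\end{smallmatrix}\bigr)$; and treat $|\lambda|=1$ by inductively splitting off $\Omega$-nondegenerate $A$-invariant summands generated by maximal Jordan chains, with the sign read off the top-order self-pairing. The parity dichotomy you invoke (a single self-dual chain when $\widetilde Q_p$ is symmetric, i.e.\ $p$ odd, versus a hyperbolic pair of equal-length chains when it is antisymmetric, i.e.\ $p$ even) is exactly the paper's, and your identification of the signs with the signature of $\hat Q^\lambda_m$ matches Propositions \ref{sumd} and \ref{sumds}.

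There is, however, one concrete gap: the step ``in a suitable symplectic basis takes the displayed form.'' Choosing a maximal-chain generator $v$ with normalized top pairing and running Gram--Schmidt only yields, by \eqref{mattriang}, a block $\bigl(\begin{smallmatrix}J^{-1}&J^{-1}S\\0&J^\tau\end{smallmatrix}\bigr)$ with $S$ symmetric (or Hermitian) and a prescribed corner entry; the remaining entries of $S$ are not controlled, so what you obtain at this stage is only a quasi-normal form in the sense of the introduction. To force $S=\operatorname{diag}(0,\ldots,0,\s)$ (resp.\ the matrices $S(k,\s,\lambda)$ in the $S^1\setminus\{\pm1\}$ case) one must first correct the generator, replacing $v$ by $v-\sum_j c_j(A-\lambda\Id)^jv$ so that all sub-leading pairings $T_{i,j}(v)$ with $i,j\le k-1$ vanish; this is Lemma \ref{lem:Todd} and its even-$p$ analogues, it relies on the reality/imaginarity constraints \eqref{sumT}--\eqref{eq:skewT} on the $T_{i,j}$, and it is precisely the point that removes the indeterminacy left in Long's forms. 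Your sketch nowhere produces this normalization, so the decomposition it yields is not the one displayed in the theorem, and the claimed uniqueness up to permutation of blocks does not yet follow. A smaller point: for $\lambda\in S^1\setminus\{\pm1\}$ the form $\widehat Q$ is Hermitian and nondegenerate for both parities of $p$, so the hyperbolic ($\s=0$) alternative disappears there --- both block sizes $4k_j$ and $4k_j+2$ carry a sign $\s_j=\pm1$; ``the same induction'' needs this adjustment rather than a verbatim repetition of the $\pm1$ case.
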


The normal form for $A_{\vert V_{[\lambda]}}$ is given in Theorem \ref{thm:normal1} for $\lambda\notin S^1$, in Theorem \ref{normalforms1} for $\lambda=\pm1$, and in Theorem \ref{normalforms3} for $\lambda \in S^1\setminus\{\pm1\}$.
The characterisation of the signs is given in Proposition \ref{sumd} for $\lambda=\pm1$ and in Proposition \ref{sumds} for $\lambda \in S^1\setminus\{\pm1\}$.

A direct consequence of Theorem \ref{thm:normalform} is the following characterization of the conjugacy class of a matrix in the symplectic group.
\begin{theorem}
	The conjugacy class of a matrix $A\in\Sp(2n,\R)$ is determined by the following data:
	\begin{itemize}
		\item the eigenvalues of $A$ which arise in quadruples $[\lambda]=\{ \lambda,\lambda^{-1},\overline{\lambda},\overline{\lambda}^{-1}\}$;
\item the dimension $\dim\bigl( \Ker (A-\lambda\Id)^r\bigr)$ for each $r\ge 1$ for one eigenvalue in each class $[\lambda]$;
\item for $\lambda=\pm 1$, the rank and the signature of the symmetric form $\hat{Q}^\lambda_{2k}$ for each $k\ge 1$ and  for an eigenvalue $\lambda$ in $S^1\setminus\{\pm 1\}$ chosen in each $[\lambda]$, the rank and the signature of the Hermitian form $\hat{Q}^\lambda_{m}$ for each $m\ge 1$, with 
\[\begin{array}{lll}
\hat{Q}^\lambda_{m} :& \Ker\bigl( (A-\lambda\id)^{m}\bigr)\times \Ker\bigl( (A-\lambda\id)^{m}\bigr)\rightarrow \C\nonumber&\\
&\quad (v,w)\mapsto\frac{1}{ \lambda} \Omega\bigl((A-\lambda\id)^{k}v,(A-\overline{\lambda}\id)^{k-1}\overline{w}\bigr)  &\textrm { if }m=2k\\
&\quad (v,w)\mapsto  i\, \Omega\bigl((A-\lambda\id)^{k}v,(A-\overline{\lambda}\id)^{k}\overline{w}\bigr)  &\textrm { if }m=2k+1.
\end {array}\]

\end{itemize}\hfill{$\square$}
 \end{theorem}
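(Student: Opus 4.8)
The plan is to deduce this from the normal form of Theorem~\ref{thm:normalform} by establishing two things: that the listed data is invariant under conjugation in $\Sp(2n,\R)$, and that it determines the normal form uniquely up to a permutation of the elementary blocks. Since two matrices are conjugate in $\Sp(2n,\R)$ precisely when they represent the same symplectic endomorphism in two symplectic bases, and since permuting the blocks of a symplectic direct sum is itself realized by a symplectic reindexing of the basis, these two facts together give the claim: conjugate matrices share the data, and matrices sharing the data have a common normal form, hence are conjugate.

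For the invariance, I would take $B=PAP^{-1}$ with $P\in\Sp(2n,\R)$. The eigenvalues and the numbers $\dim\Ker(A-\lambda\Id)^r$ are ordinary conjugacy invariants, hence unchanged. The forms $\hat{Q}^\lambda_{m}$ are defined purely in terms of $A$ and $\Omega$; since $P$ preserves $\Omega$ and satisfies $P(A-\lambda\Id)^r=(B-\lambda\Id)^rP$, it carries $\Ker(A-\lambda\Id)^m$ onto $\Ker(B-\lambda\Id)^m$ intertwining the two forms, i.e. as an isometry. An isometry preserves rank and signature, so these invariants agree as well.

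For the completeness, I would argue case by case using the decomposition $V^\C=\bigoplus_{[\lambda]}V_{[\lambda]}^\C$ and the normal forms recorded in Theorem~\ref{thm:normalform}. When $\lambda\notin S^1$ the elementary blocks carry no sign, and the multiplicity of each block size $q_j$ is fixed by the standard count, the number of blocks of size $\ge r$ being $\dim\Ker(A-\lambda\Id)^r-\dim\Ker(A-\lambda\Id)^{r-1}$; so data (1)--(2) already pin down $A_{\vert V_{[\lambda]}}$. When $\lambda=\pm1$ or $\lambda\in S^1\setminus\{\pm1\}$, the same count fixes the block sizes, while Proposition~\ref{sumd} (resp.\ Proposition~\ref{sumds}) identifies, for each size, the number of blocks with sign $+1$ and with sign $-1$ as the numbers of positive and negative eigenvalues of $\hat{Q}^\lambda_{2k}$ (resp.\ $\hat{Q}^\lambda_{m}$); the remaining $\s_j=0$ blocks are then forced by the residual kernel dimensions. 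This is exactly the uniqueness assertion closing each case of Theorem~\ref{thm:normalform}, so the data determines every block together with its sign, i.e.\ the full normal form up to reordering.

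The only delicate bookkeeping is in the case $\lambda=\pm1$, where the value $\s_j=0$ is allowed and forces $r_j$ odd, so one must check that the signature data, which only sees $\s_j=\pm1$, together with the kernel dimensions still separates the contributions of equal-size blocks with and without sign. This, however, is precisely the content of Proposition~\ref{sumd}, so no genuinely new argument is needed and the theorem follows as a direct consequence of Theorem~\ref{thm:normalform}.
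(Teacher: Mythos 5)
Your proposal is correct and follows exactly the route the paper intends: the paper states this theorem as a direct consequence of Theorem \ref{thm:normalform}, whose uniqueness clauses (together with Propositions \ref{sumd} and \ref{sumds}) are precisely the invariance-plus-completeness argument you spell out. Your additional bookkeeping for the $\s_j=0$ blocks when $\lambda=\pm1$ (odd $r_j$, contributing pairs of odd-size Jordan blocks, hence recoverable from the kernel dimensions alone) is a correct and worthwhile elaboration of a point the paper leaves implicit.
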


\section*{Acknowledgements}
I thank my thesis supervisors, Fr\'ed\'eric Bourgeois and Alexandru Oancea who  encouraged me to write this text.
I deeply thank the referee for the many  improvements he brought to this text, and  for pointing to me the works on normal forms of matrices in the framework of indefinite inner product spaces \cite{YLR,GLR, GR,Mehl,Rod},
and more particularly the normals forms for symplectic matrices given in \cite{Mehl2, Serg}.
I am grateful to the  Foundation for Scientific Research (FNRS-FRS) for its support.
The author acknowledges partial support from the ERC via the grant StG-259118-STEIN, from an ARC
of the Communaut\'e fran\c caise de Belgique and from the P\^ole d'Attraction Interuniversitaire Dygest.

%%%%%%%%

%%%%%%%%%%%%%%%%%%%%%
\section{Preliminaries}

\begin{lemma}\label{lem:orthokerim}
	Consider $A\in\Sp(V,\Omega)$ and let $0\neq \lambda\in \C$.
	Then $\Ker(A-\lambda\Id)^j$ in $V^\C$ is the symplectic orthogonal complement of $\im(A- \tfrac{1}{\lambda}\Id)^j.$
\end{lemma}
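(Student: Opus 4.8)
The plan is to prove the claim by a direct dimension count combined with an orthogonality argument. Recall that for $A \in \Sp(V,\Omega)$ we have $\Omega(Au, Av) = \Omega(u,v)$, which is equivalent to saying $A$ is invertible with $\Omega(Au, v) = \Omega(u, A^{-1}v)$ for all $u, v \in V^\C$ (extending $\Omega$ and $A$ $\C$-linearly). In other words, the symplectic adjoint of $A$ is $A^{-1}$. I would first isolate this adjunction identity, since it is the only structural input about $A$ that is needed.

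First I would establish the inclusion $\Ker(A - \lambda\Id)^j \subseteq \bigl(\im(A - \tfrac1\lambda \Id)^j\bigr)^{\perp_\Omega}$. Take $u \in \Ker(A-\lambda\Id)^j$ and an arbitrary $w = (A - \tfrac1\lambda\Id)^j z$ in the image. Using the adjunction $\Omega(Au, v) = \Omega(u, A^{-1}v)$, the symplectic adjoint of $(A - \tfrac1\lambda\Id)$ is $(A^{-1} - \tfrac1\lambda\Id) = -\tfrac1\lambda A^{-1}(A - \lambda\Id)$. The scalar factor $-\tfrac1\lambda A^{-1}$ commutes with everything and is invertible, so the adjoint of $(A - \tfrac1\lambda\Id)^j$ carries a factor of $(A - \lambda\Id)^j$ (times an invertible operator commuting with $A$). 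Hence $\Omega(u, w) = \Omega\bigl((\text{invertible})\,(A-\lambda\Id)^j u,\, z\bigr) = 0$, giving the inclusion.

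For the reverse inclusion I would avoid recomputing orthogonality and instead argue by dimensions. The form $\Omega$ is nondegenerate on $V^\C$, so for any subspace $W$ one has $\dim W + \dim W^{\perp_\Omega} = \dim V^\C$. Thus it suffices to check $\dim \Ker(A-\lambda\Id)^j + \dim \im(A - \tfrac1\lambda\Id)^j = \dim V^\C$. The rank–nullity theorem gives $\dim \im(A - \tfrac1\lambda\Id)^j = \dim V^\C - \dim \Ker(A - \tfrac1\lambda\Id)^j$, so the identity reduces to $\dim \Ker(A - \lambda\Id)^j = \dim \Ker(A - \tfrac1\lambda\Id)^j$. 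This last equality is the crux: it says that $\lambda$ and $\tfrac1\lambda$ have the same partial multiplicities (the same Jordan block structure). I would deduce it from the adjunction relation, which shows that $(A - \tfrac1\lambda\Id)$ is conjugate, via the symplectic pairing, to a scalar multiple of the adjoint of $(A - \lambda\Id)$; concretely, the pairing $\Omega$ induces a nondegenerate duality between the generalized eigenspaces $E_\lambda$ and $E_{1/\lambda}$ under which the nilpotent parts are mutually adjoint, forcing equal Jordan data and hence equal kernel dimensions at every power $j$.

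The main obstacle is precisely this equality of kernel dimensions $\dim\Ker(A-\lambda\Id)^j = \dim\Ker(A-\tfrac1\lambda\Id)^j$: establishing the inclusion is a one-line adjunction computation, but the reverse inclusion genuinely uses that eigenvalues pair off as $\lambda \leftrightarrow \tfrac1\lambda$ with matching multiplicities. I would make this rigorous by noting that $x \mapsto \Omega(\cdot, x)$ restricts to a nondegenerate pairing $E_\lambda \times E_{1/\lambda} \to \C$ (all other generalized eigenspaces being $\Omega$-orthogonal to $E_\lambda$, as recorded in the quadruple observation $\Omega(v_\lambda, v_\mu) = 0$ unless $\mu = \tfrac1\lambda$ in the introduction), and that under this pairing $(A-\lambda\Id)|_{E_\lambda}$ is adjoint to an invertible multiple of $(A - \tfrac1\lambda\Id)|_{E_{1/\lambda}}$; adjoint nilpotent operators have transpose Jordan structure and therefore identical kernel dimensions at each power. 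Combining the inclusion with this dimension equality yields equality of the two subspaces.
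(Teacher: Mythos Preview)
Your approach works in principle, but it is more roundabout than the paper's and has a circularity issue in its final step.

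The paper proves the single identity
\[
\Omega\bigl((A-\lambda\Id)^j u,\, A^j v\bigr) = (-\lambda)^j\,\Omega\bigl(u,\, (A-\tfrac1\lambda\Id)^j v\bigr)
\]
and then simply observes that, since $A^j$ is bijective and $\Omega$ is nondegenerate, this reads as an \emph{equivalence}: $(A-\lambda\Id)^j u = 0$ if and only if $u$ is $\Omega$-orthogonal to every $(A-\tfrac1\lambda\Id)^j v$. Both inclusions fall out of the same line, and no dimension count is needed. Your adjunction computation already contains this identity (up to invertible factors), so you were one step from finishing: rather than extracting only the forward inclusion, read it both ways.

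Your detour through a dimension count is not wrong in spirit, but as written it is circular. You justify $\dim\Ker(A-\lambda\Id)^j = \dim\Ker(A-\tfrac1\lambda\Id)^j$ by invoking a nondegenerate pairing $\Omega : E_\lambda \times E_{1/\lambda} \to \C$ between \emph{generalized} eigenspaces; the introduction only records the orthogonality for genuine eigenvectors, and the statement for generalized eigenspaces is exactly the corollary the paper derives \emph{from} this lemma. If you want to salvage the dimension-count route, the non-circular fix is already in your hands: you computed that the $\Omega$-adjoint of $(A-\tfrac1\lambda\Id)^j$ on all of $V^\C$ is an invertible operator times $(A-\lambda\Id)^j$, and adjoint operators with respect to a nondegenerate form have equal rank, hence equal kernel dimensions. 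Alternatively, $A^{-1} = \Omega_0^{-1} A^\tau \Omega_0$ shows $A$ is similar to $A^{-1}$, which gives the equality of kernel dimensions immediately.
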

\begin{proof}
	\begin{eqnarray*}
		\Omega \bigl( (A - \lambda \Id ) u, Av \bigr) & = & \Omega(Au,Av)-\lambda\Omega(u,Av)
		=\Omega(u,v)-\lambda\Omega(u,Av)\\
		&=&-\lambda\Omega \Bigl( u, \bigl(A-\tfrac{1}{\lambda} \Id \bigr) v \Bigr)
	\end{eqnarray*}
	and by induction
	\begin{equation}\label{eq:omegaA}
		\Omega \bigl( (A-\lambda\Id)^ju, A^jv \bigr) = (-\lambda)^j\Omega \Bigl( u, \bigl( A-\tfrac{1}{\lambda} \Id \bigr)^jv \Bigr).
	\end{equation}
	The result follows from the fact that $A$ is invertible.
\end{proof}
\begin{cor}\label{lem:orthovalpro}
	If $E_\lambda$ denotes the generalized eigenspace of eigenvalue $\lambda,$ i.e $E_\lambda := \bigl\{ v\in V^\C \ \vert \ (A-\lambda\Id)^jv=0 \textrm{ for an integer } j >0 \bigr\}$, we have
	\begin{equation*}
		\Omega(E_\lambda,E_\mu)=0 \quad \textrm{ when }~\lambda\mu\ne 1 .
	\end{equation*}
\end{cor}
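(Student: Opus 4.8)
The plan is to deduce Corollary \ref{lem:orthovalpro} directly from Lemma \ref{lem:orthokerim} by passing from the finite-order kernels $\Ker(A-\lambda\Id)^j$ to the full generalized eigenspace $E_\lambda$, which is simply the union (equivalently, the stabilized value) of these kernels as $j$ grows. Since $V^\C$ is finite dimensional, there is an integer $N$ so that $E_\lambda=\Ker(A-\lambda\Id)^N$ and $E_\mu=\Ker(A-\mu\Id)^N$ simultaneously for all eigenvalues involved; thus it suffices to show $\Omega\bigl(\Ker(A-\lambda\Id)^N,\Ker(A-\mu\Id)^N\bigr)=0$ whenever $\lambda\mu\ne 1$.

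First I would fix such an $N$ and invoke Lemma \ref{lem:orthokerim}, which identifies $\Ker(A-\lambda\Id)^N$ as the symplectic orthogonal complement of $\im(A-\tfrac{1}{\lambda}\Id)^N$. Consequently, to prove $\Omega(v,w)=0$ for $v\in E_\lambda$ and $w\in E_\mu$, it is enough to check that $w\in\im(A-\tfrac{1}{\lambda}\Id)^N$, i.e. that every generalized eigenvector of eigenvalue $\mu$ lies in the image of $(A-\tfrac{1}{\lambda}\Id)^N$. Here the hypothesis $\lambda\mu\ne1$ enters as $\mu\ne\tfrac{1}{\lambda}$.

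The key step is the observation that the restriction of $(A-\tfrac{1}{\lambda}\Id)$ to the generalized eigenspace $E_\mu$ is invertible precisely when $\mu\ne\tfrac1\lambda$: on $E_\mu$ the operator $A$ has the single eigenvalue $\mu$, so $(A-\tfrac1\lambda\Id)\big|_{E_\mu}$ has the single eigenvalue $\mu-\tfrac1\lambda\ne0$, hence is an automorphism of $E_\mu$. Since $E_\mu$ is $A$-invariant and therefore $(A-\tfrac1\lambda\Id)$-invariant, the power $(A-\tfrac1\lambda\Id)^N$ also restricts to an automorphism of $E_\mu$; in particular $E_\mu=(A-\tfrac1\lambda\Id)^N E_\mu\subseteq\im(A-\tfrac1\lambda\Id)^N$. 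Combined with the previous paragraph this gives $\Omega(E_\lambda,E_\mu)=0$, as desired.

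I do not expect a serious obstacle here: the content is entirely the bookkeeping of generalized eigenspaces as stabilized kernels and the elementary fact that $A-\tfrac1\lambda\Id$ acts invertibly on $E_\mu$ away from the excluded value. The only point requiring mild care is choosing the single exponent $N$ uniformly and confirming that $A$-invariance of $E_\mu$ legitimately promotes the invertibility of $(A-\tfrac1\lambda\Id)\big|_{E_\mu}$ to that of its $N$-th power, so that the inclusion into the image is genuine rather than merely an equality of dimensions.
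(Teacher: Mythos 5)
Your proof is correct and follows essentially the same route as the paper: both rest on Lemma \ref{lem:orthokerim} identifying $\Ker(A-\lambda\Id)^j$ as the symplectic orthogonal complement of $\im\bigl(A-\tfrac{1}{\lambda}\Id\bigr)^j$, and both then use the generalized eigenspace structure to see that $E_\mu$ lies in that image when $\mu\neq\tfrac{1}{\lambda}$. The only difference is cosmetic: where the paper invokes ``by Jordan normal form'' to identify $\cap_j\im\bigl(A-\tfrac{1}{\lambda}\Id\bigr)^j$ with the sum of the other generalized eigenspaces, you spell out the underlying invertibility of $\bigl(A-\tfrac{1}{\lambda}\Id\bigr)\big|_{E_\mu}$ explicitly, which is a fair elaboration of the same step.
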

Indeed the symplectic orthogonal complement of $E_\lambda=\cup_j \Ker (A-\lambda\Id)^j$ is the intersection of the $\im(A-\tfrac{1}{\lambda}\Id)^j.$ By Jordan normal form, this intersection is
the sum of the generalized eigenspaces corresponding to the eigenvalues which are not  $\frac{1}{\lambda}.$\\

If $v= u+iu'$ is in $\Ker (A-\lambda\Id)^j$ with $u$ and $u'$ in $V$ then
$\overline{v}=u-iu'$ is in $\Ker (A-\overline{\lambda}\Id)^j$ so that  $E_\lambda \oplus E_{\overline{\lambda}}$ is the complexification of a real subspace of $V$.
From this remark and corollary \ref{lem:orthovalpro} the space
\begin{equation}
	W_{[\lambda]}:=E_\lambda\oplus E_{\frac{1}{\lambda}}\oplus E_{\overline\lambda}\oplus E_{\frac{1}{\overline\lambda}}
\end{equation}
is  the complexification of a real and symplectic $A$-invariant subspace $V_{[\lambda]}$   and
\begin{equation}
	V=V_{[\lambda_1]}\oplus V_{[\lambda_2]}\oplus\ldots\oplus V _{[\lambda_K]}
\end{equation}
where  we  denote by
$\left[ \lambda\right]$ the set  $\{ \lambda, \overline\lambda, \frac{1}{\lambda},\frac{1}{\overline\lambda}\}$ and by $\left[\lambda_1\right], \ldots,\left[ \lambda_K\right] $
the distinct such sets exhausting the eigenvalues of $A$.\\
We denote by $A_{[\lambda_i]}$  the restriction of $A$ to $V_{[\lambda_i]}.$ It is clearly enough to obtain normal forms for each $A_{[\lambda_i]}$ since $A$ will be a symplectic direct sum of those.

We shall construct a symplectic basis of $W_{[\lambda]}$ (and of $V_{[\lambda]}$) adapted to $A$ for a given eigenvalue $\lambda$ of $A$.
We assume that  $(A-\lambda\Id)^{p+1}=0$ and $(A-\lambda\Id)^{p}\ne 0$ on the generalized eigenspace $E_\lambda.$ Since $A$ is real,
this integer $p$ is the same for $\overline{\lambda}$.
By lemma \ref{lem:orthokerim}, $\Ker (A-\lambda\Id)^j$ is the symplectic orthogonal complement of $\im\bigl(A-\tfrac{1}{\lambda}\Id\bigr)^j$ for all $j,$ thus $\dim\Ker (A-\lambda\Id)^j=\dim \Ker \bigl(A-\tfrac{1}{\lambda}\Id\bigr)^j$;
hence the integer $p$ is the same for $\lambda$ and $\frac{1}{\lambda}$.

We decompose $W_{[\lambda]}$ (and $V_{[\lambda]}$) into a direct sum of $A$-invariant symplectic subspaces.
Given a symplectic subspace $Z$ of $V_{[\lambda]}$ which is $A$-invariant  , its orthogonal complement  (with respect to the symplectic $2$-form) $V':=Z^{\perp_\Omega}$ is again symplectic and $A$-invariant. The generalized eigenspace for $A$ on $V^{'\C}$  are $E'_{\mu}=V^{'\C}\cap E_{\mu}$, 
and the smallest integer $p'$ for which $(A-\lambda\Id)^{p'+1}=0$ on $E'_{\lambda}$ is such that $p'\le p$.

Hence, to get the decomposition  of $W_{[\lambda]}$ (and $V_{[\lambda]}$) it is enough to build
a symplectic subspace  of $W_{[\lambda]}$ which is $A$-invariant  and closed under complex conjugation
and to proceed inductively. We shall construct such a subspace, containing a well chosen vector $v\in E_\lambda$ so that $(A-\lambda\Id)^{p}v \neq 0$.

We shall distinguish three cases; first $\lambda\notin S^1$ then $\lambda=\pm 1$ and
finally $\lambda\in S^1\setminus \{\pm 1\}.$

We first present a few technical lemmas which will be used for this construction.

%%%%%%%%%%%%%%%%%%%%%%%%%%%%%%%%%
\subsection{A few technical lemmas}\label{appendicesemisimple}
%%%%%%%%%%%%%%%%%%%%
Let $(V,\Omega)$ be a real symplectic vector space. Consider $A\in\Sp(V,\Omega)$ and let $\lambda$ be an eigenvalue of $A$ in $V^\C.$
	\begin{lemma}\label{lem:tech1}
	For any positive integer $j$, the bilinear map
	\begin{equation*}
		\widetilde{Q}_j : \raisebox{.2ex}{$E_\lambda$}\,/\raisebox{-.2ex}{$\Ker(A-\lambda\Id)^j$}\times \raisebox{.2ex}{$E_{\frac{1}{\lambda}}$}/\raisebox{-.2ex}{$\Ker\bigl(A-{\tfrac{1}{\lambda}} \Id\bigr)^j$}\rightarrow \C 
	\end{equation*}
	\begin{equation}
		\bigl([v],[w]\bigr)\mapsto  \widetilde{Q}_j \bigl([v],[w]\bigr):=\Omega \bigl( (A-\lambda\Id)^jv,w \bigr) \qquad v\in E_\lambda, w\in E_{\frac{1}{\lambda}}
	\end{equation}
	is well defined and non degenerate.
	In the formula, $[v]$ denotes the  class containing $v$ in the appropriate quotient.
\end{lemma}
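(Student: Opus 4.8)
The plan is to reduce both assertions to Lemma~\ref{lem:orthokerim}, Corollary~\ref{lem:orthovalpro}, and the non-degeneracy of the $\C$-bilinear extension of $\Omega$ to $V^\C$.

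\emph{Well-definedness.} First I would check independence of the two representatives separately. If $v_0\in\Ker(A-\lambda\Id)^j$, then $(A-\lambda\Id)^j(v+v_0)=(A-\lambda\Id)^jv$, so replacing $v$ by $v+v_0$ does not change the value; this settles the first variable. For the second, I would note that $(A-\lambda\Id)^jv$ lies in $\im(A-\lambda\Id)^j$, and that Lemma~\ref{lem:orthokerim}, applied with $\tfrac1\lambda$ in place of $\lambda$, identifies $\Ker(A-\tfrac1\lambda\Id)^j$ with the symplectic orthogonal of $\im(A-\lambda\Id)^j$. Hence $\Omega\bigl((A-\lambda\Id)^jv,w_0\bigr)=0$ for every $w_0\in\Ker(A-\tfrac1\lambda\Id)^j$, so the value depends only on $[w]$.

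\emph{Non-degeneracy.} Suppose $[v]$ lies in the left kernel, i.e. $\Omega\bigl((A-\lambda\Id)^jv,w\bigr)=0$ for all $w\in E_{\frac1\lambda}$. Put $u:=(A-\lambda\Id)^jv$, which still lies in $E_\lambda$ because $E_\lambda$ is $A$-invariant. By Corollary~\ref{lem:orthovalpro}, $u$ is automatically $\Omega$-orthogonal to every generalized eigenspace $E_\mu$ with $\mu\neq\tfrac1\lambda$, while the hypothesis provides orthogonality to $E_{\frac1\lambda}$. Since $V^\C=\bigoplus_\mu E_\mu$, this shows $\Omega(u,\cdot)\equiv 0$ on $V^\C$, and non-degeneracy of $\Omega$ forces $u=0$, that is $[v]=0$. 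The right kernel is disposed of identically; alternatively, I would invoke the equality of dimensions $\dim\bigl(E_\lambda/\Ker(A-\lambda\Id)^j\bigr)=\dim\bigl(E_{\frac1\lambda}/\Ker(A-\tfrac1\lambda\Id)^j\bigr)$, which holds because $\dim\Ker(A-\lambda\Id)^j=\dim\Ker(A-\tfrac1\lambda\Id)^j$ (Lemma~\ref{lem:orthokerim}) and $\dim E_\lambda=\dim E_{\frac1\lambda}$ (taking $j$ large), so that triviality of the left kernel already forces the form to be non-degenerate on both factors.

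I do not anticipate a serious obstacle: the argument is essentially orthogonality bookkeeping. The only points requiring care are performing the substitution $\lambda\leftrightarrow\tfrac1\lambda$ correctly when quoting Lemma~\ref{lem:orthokerim}, and recognising that Corollary~\ref{lem:orthovalpro} supplies for free the orthogonality of $u$ to all the ``wrong'' eigenspaces, so that the single hypothesis against $E_{\frac1\lambda}$ upgrades to orthogonality against the whole of $V^\C$.
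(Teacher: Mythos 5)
Your argument is correct and follows essentially the same route as the paper: well-definedness via the orthogonality relation underlying Lemma \ref{lem:orthokerim} (equivalently equation \eqref{eq:omegaA}), and non-degeneracy from the fact that $\Omega$ restricts to a non-degenerate pairing between $E_\lambda$ and $E_{\frac{1}{\lambda}}$ (Corollary \ref{lem:orthovalpro} plus non-degeneracy of $\Omega$ on $V^\C$). The only cosmetic difference is that for the right kernel the paper argues directly that $w\perp\im(A-\lambda\Id)^j$ forces $w\in\Ker\bigl(A-\tfrac{1}{\lambda}\Id\bigr)^j$, whereas you fall back on the equality of dimensions; both are fine.
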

\begin{proof}
 The fact that $\widetilde{Q}_j $ is well defined follows from equation \eqref{eq:omegaA};
 indeed, for any integer $j$, we have 
 \begin{equation}
	\Omega \bigl( (A-\lambda\Id)^j u,v \bigr) = (-\lambda)^j \Omega \Bigl( A^j u, \bigl( A-\tfrac{1}{\lambda}\Id \bigr)^jv \Bigr).
\end{equation}
The map is non degenerate because $\widetilde{Q}_j \bigl([v],[w]\bigr)=0$ for all $ w$ if and only if $(A-\lambda\Id)^j v=0$ since $\Omega$ is a non degenerate pairing between
$E_\lambda$ and $E_{\frac{1}{\lambda}}$, thus if and only if $[v]=0.$
	Similarly, $ \widetilde{Q}_j \bigl([v],[w]\bigr)=0$ for all $v $ if and only if $w$ is $\Omega$-orthogonal to $\im(A-\lambda\Id)^j,$ thus if and only if
	$w\in \Ker\bigl(A-\tfrac{1}{\lambda}\Id\bigr)^j$ hence $[w]=0.$
\end{proof}
\begin{lemma}
For any $v,w\in V$, any $\lambda\in \C\setminus\{0\}$ and any integers $i\ge 0,~j>0$ we have:
\begin{eqnarray}
 \Omega \Bigl( (A-\lambda\Id)^{i}v,\bigl(A-\tfrac{1}{\lambda}\Id\bigr)^{j}w \Bigr)&=& -\frac{1}{\lambda}
  \Omega \Bigl( (A-\lambda\Id)^{i+1}v,\bigl(A-\tfrac{1}{\lambda}\Id\bigr)^{j}w \Bigr)\label{eq:Aij}\\
  &&-\frac{1}{\lambda^2}  \Omega \Bigl( (A-\lambda\Id)^{i+1}v,\bigl(A-\tfrac{1}{\lambda}\Id\bigr)^{j-1}w \Bigr).\nonumber
\end{eqnarray}
In particular, if $\lambda$ is an eigenvalue of $A$, if $v\in E_\lambda$ is such that $p\ge 0$ is the largest integer  for which $(A-\lambda\Id)^{p}v\neq 0$,  we have  for any integers $k,j\ge 0$:
	\begin{equation}\label{eq:p}
		\Omega \bigl( (A-\lambda\Id)^{p+k}v,w \bigr) = (-\lambda^2)^j \Omega \Bigl( (A-\lambda\Id)^{p+k-j}v, \bigl(A-{\tfrac{1}{\lambda}}\Id \bigr)^jw \Bigr)
	\end{equation}
so that
	\begin{equation}\label{eq:Qbiendef}
		\Omega \bigl( (A-\lambda\Id)^pv,w \bigr)=(-\lambda^2)^p \Omega \Bigl(v, \bigl( A - {\tfrac{1}{\lambda}}\Id \bigr)^pw \Bigr)
	\end{equation}
and
	\begin{equation}\label{eq:zero}
		\Omega \Bigl( (A-\lambda\Id)^{k}v, \bigl( A-{\tfrac{1}{\lambda}}\Id \bigr)^jw\Bigr)=0 ~\textrm{if }~k+j>p.
	\end{equation}
\end{lemma}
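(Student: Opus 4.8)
The plan is to prove the displayed identity \eqref{eq:Aij} first, since it is the algebraic engine from which the three ``in particular'' formulas \eqref{eq:p}, \eqref{eq:Qbiendef} and \eqref{eq:zero} all follow. Throughout I abbreviate $B:=A-\lambda\Id$ and $C:=A-\tfrac1\lambda\Id$, so that the claim \eqref{eq:Aij} reads $\Omega(B^iv,C^jw)=-\tfrac1\lambda\,\Omega(B^{i+1}v,C^jw)-\tfrac1{\lambda^2}\,\Omega(B^{i+1}v,C^{j-1}w)$ for $i\ge 0$, $j>0$ and arbitrary $v,w$.

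To establish \eqref{eq:Aij} I would start from the right-hand side, factor out $-\tfrac1\lambda$ and pull $B^{i+1}v$ out of the first slot, so that the two terms combine into $-\tfrac1\lambda\,\Omega\bigl(B^{i+1}v,\,(C^j+\tfrac1\lambda C^{j-1})w\bigr)$. The key observation is that $C+\tfrac1\lambda\Id=A$, hence $C^j+\tfrac1\lambda C^{j-1}=(C+\tfrac1\lambda\Id)C^{j-1}=A\,C^{j-1}$, turning the expression into $-\tfrac1\lambda\,\Omega\bigl((A-\lambda\Id)B^iv,\;A\,C^{j-1}w\bigr)$. At this point I apply the one-step identity already computed inside the proof of Lemma~\ref{lem:orthokerim}, namely $\Omega\bigl((A-\lambda\Id)u,Au'\bigr)=-\lambda\,\Omega\bigl(u,(A-\tfrac1\lambda\Id)u'\bigr)$, with $u=B^iv$ and $u'=C^{j-1}w$; since $(A-\tfrac1\lambda\Id)C^{j-1}w=C^jw$ this collapses the expression to $\Omega(B^iv,C^jw)$, which is the left-hand side. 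This proves \eqref{eq:Aij} for all $v,w$.

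Next I would derive \eqref{eq:p} by induction on $j$, the case $j=0$ being the identity. Comparing the two expressions for $\Omega(B^{p+k}v,w)$ given by the hypothesis at level $j$ and the desired identity at level $j+1$, and cancelling the common factor $(-\lambda^2)^j$, the inductive step reduces to the single relation $\Omega(B^{p+k-j}v,C^jw)=-\lambda^2\,\Omega(B^{p+k-j-1}v,C^{j+1}w)$. Substituting \eqref{eq:Aij} (with $i=p+k-j-1$ and $C$-power $j+1$) and rearranging, this relation is equivalent to the vanishing of the cross term $\Omega(B^{p+k-j}v,C^{j+1}w)$. This vanishing is exactly where the hypothesis on $v$ is used: because $(A-\lambda\Id)^{p+1}v=0$ we have $(A-\lambda\Id)^{j+1}\bigl(B^{p+k-j}v\bigr)=B^{p+k+1}v=0$, so $B^{p+k-j}v\in\Ker(A-\lambda\Id)^{j+1}$; by Lemma~\ref{lem:orthokerim} this kernel is the symplectic orthogonal of $\im(A-\tfrac1\lambda\Id)^{j+1}$, and $C^{j+1}w$ lies in that image, so the pairing is zero. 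The special case $k=0$, $j=p$ of \eqref{eq:p} is precisely \eqref{eq:Qbiendef}, while \eqref{eq:zero} follows directly from the same orthogonality remark with no induction: if $k+j>p$ then $(A-\lambda\Id)^jB^kv=B^{k+j}v=0$, so $B^kv\in\Ker(A-\lambda\Id)^j$ is $\Omega$-orthogonal to $\im(A-\tfrac1\lambda\Id)^j\ni C^jw$.

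The step I expect to be the main obstacle is recognizing that the cross term in the inductive passage vanishes, and in particular that it must be justified through Lemma~\ref{lem:orthokerim} rather than through the nilpotency $B^{p+1}v=0$ alone. Indeed $B^{p+k-j}v$ fails to be zero precisely when $j\ge k$, so a naive power count does not close the induction; one genuinely needs the kernel/image symplectic duality to kill the term. Once this is in place, the remainder is routine bookkeeping with the operators $B$ and $C$ together with the single-step symplectic identity.
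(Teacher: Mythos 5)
Your proposal is correct and follows essentially the same route as the paper: identity \eqref{eq:Aij} is obtained from the one-step relation $\Omega\bigl((A-\lambda\Id)u,Au'\bigr)=-\lambda\,\Omega\bigl(u,(A-\tfrac{1}{\lambda}\Id)u'\bigr)$ (you read the computation from right to left where the paper expands left to right), and \eqref{eq:p} is proved by induction on $j$ after killing the cross term $\Omega\bigl((A-\lambda\Id)^{p+k-j}v,(A-\tfrac{1}{\lambda}\Id)^{j+1}w\bigr)$, which the paper does via equation \eqref{eq:omegaA} and you do via the equivalent kernel/image duality of Lemma \ref{lem:orthokerim}. The derivations of \eqref{eq:Qbiendef} and \eqref{eq:zero} coincide with the paper's, and your remark that nilpotency of $(A-\lambda\Id)$ on $v$ alone does not suffice to close the induction is accurate.
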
	
\begin{proof}
We have:
{\begin{eqnarray*}
	&&\Omega\Bigl((A-\lambda\Id)^{i} v,\bigl(A-\tfrac{1}{\lambda}\Id\bigr)^jw\Bigr)\\
	 &&\ \ \  =
	 -\frac{1}{\lambda}\Omega\Bigl(\bigl(A-{\lambda}\Id
	 -A\bigr)(A-\lambda\Id)^{i} v,\bigl(A-\tfrac{1}{\lambda}\Id\bigr)^jw\Bigr)\\
	&&\ \ \ =-\frac{1}{\lambda}\Omega\Bigl((A-\lambda\Id)^{i+1} v,\bigl(A-\tfrac{1}{\lambda}\Id\bigr)^jw\Bigr)\\
	&&\ \ \  \quad+\frac{1}\lambda \Omega\Bigl(A(A-\lambda\Id)^{i} v,\bigl(A-\tfrac{1}{\lambda}\Id\bigr)\bigl(A-\tfrac{1}{\lambda}\Id\bigr)^{j-1}w\Bigr)\\
	&&\ \ \ =-\frac{1}{\lambda}\Omega\Bigl((A-\lambda\Id)^{i+1} v,\bigl(A-\tfrac{1}{\lambda}\Id\bigr)^jw\Bigr)\\
	&&\ \ \  \quad+\frac{1}{\lambda}\Omega\Bigl((A-\lambda\Id)^{i} v,\bigl(A-\tfrac{1}{\lambda}\Id\bigr)^{j-1}w\Bigr)\\
	&&\ \ \  \quad -\frac{1}{\lambda^2}\Omega\Bigl(A(A-\lambda\Id)^{i} v,\bigl(A-\tfrac{1}{\lambda}\Id\bigr)^{j-1}w\Bigr)
\end{eqnarray*}}
and  formula \eqref{eq:Aij} follows.\\
For any integers $k,j \ge 0$ and   any $v$ such that $(A-\lambda\id)^pv=0$, we have, by (\ref{eq:omegaA}), 
	\[
		(-\lambda)^{j} \Omega \Bigl( (A-\lambda\Id)^{p+k+1-j}v, \bigl(A-{\tfrac{1}{\lambda}}\Id \bigr)^{j} w \Bigr)=\Omega \bigl( (A-\lambda\Id)^{p+k+1}v,A^{j}w \bigr)=0.
	\]
	Hence, applying  formula \eqref{eq:Aij} with		
a decreasing  induction on $j$, we get formula \eqref{eq:p}. The other formulas follow readily.
	\end{proof}
\begin{definition}
For $\lambda \in S^1$  an eigenvalue of $A$ and $v\in E_\lambda$ a generalized eigenvector, we define
\begin{equation} \label{eq:defT}
T_{i,j}(v):=\frac{1}{\lambda^i {\overline{\lambda}}^j}\Omega\bigl((A-\lambda\id)^i v, (A-{\overline{\lambda}})^j\overline{v}\bigr).
\end{equation}
We have, by equation (\ref{eq:Aij}) : 
\begin{equation}\label{sumT}
T_{i,j}(v)=-T_{i+1,j}(v)-T_{i+1,j-1}(v),
\end{equation}
and also,
\begin{equation}\label{eq:skewT}
T_{i,j}(v)=-\overline{T_{j,i}(v)}.
\end{equation}
\end{definition}
\begin{lemma}\label{lem:Todd}
Let $\lambda \in S^1$ be an eigenvalue of $A$ and $v\in E_\lambda$ be a generalised eigenvector such that  the largest integer $p$ so that
$(A-\lambda\id)^pv \neq 0$ is odd, say, $p=2k-1$.  Then, in the $A$-invariant subspace $E^v_\lambda$ of $E_\lambda$ generated by $v$, there exists a vector $v'$
generating the same $A$-invariant subspace $E^{v'}_\lambda=E^v_\lambda$, so that $(A-\lambda\id)^pv' \neq 0$ and so that 
$$
T_{i,j}(v')=0 \quad\textrm{ for all } i,j\le k-1.
$$
If $\lambda$ is real (i.e. $\pm1$), and if $v$ is a real vector (i.e. in $V$),  the vector $v'$ can be chosen to be real as well.
\end{lemma}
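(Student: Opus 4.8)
The plan is to look for $v'$ among the vectors generating $E^v_\lambda$, i.e. of the form $v'=\sum_{m=0}^{p}c_m(A-\lambda\id)^m v$ with $c_0\neq 0$ (and with the $c_m$ real when $\lambda=\pm1$ and $v$ is real); any such $v'$ automatically satisfies $(A-\lambda\id)^p v'=c_0(A-\lambda\id)^p v\neq 0$ and generates the same cyclic subspace. Writing $N=A-\lambda\id$ and $d_m=c_m\lambda^m$, a direct expansion from \eqref{eq:defT} gives the transformation law $T_{i,j}(v')=\sum_{a,b\ge 0}d_a\overline{d_b}\,T_{i+a,j+b}(v)$, so the task becomes: choose $d_0=1,d_1,\dots,d_p$ making $T_{i,j}(v')=0$ for all $i,j\le k-1$. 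It is worth recording the geometric meaning: since $\overline{N^j v'}=(A-\overline\lambda\id)^j\overline{v'}$, the vanishing of $T_{i,j}(v')$ for $i,j\le k-1$ says exactly that the $k$-dimensional subspace $\langle v',Nv',\dots,N^{k-1}v'\rangle$ is isotropic for the skew-Hermitian form $H(x,y):=\Omega(x,\overline y)$ on $E^v_\lambda$. Thus I am really producing a cyclic ``half-flag'' that is Lagrangian.

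First I would use the recurrence to cut down the number of equations. The moments of any generator $v'$ again satisfy \eqref{sumT}, \eqref{eq:skewT} and, by \eqref{eq:zero}, the support property $T_{i,j}(v')=0$ whenever $i+j>p=2k-1$. Consequently the whole $k\times k$ block vanishes as soon as its last row and its first column do: assuming $T_{k-1,j}(v')=0$ for $0\le j\le k-1$ and $T_{i,0}(v')=0$ for $0\le i\le k-1$, a downward induction on $i$ from $k-1$ to $0$ using $T_{i,j}(v')=-T_{i+1,j}(v')-T_{i+1,j-1}(v')$ forces $T_{i,j}(v')=0$ for every $j\in\{1,\dots,k-1\}$, the case $j=0$ being covered by the first-column hypothesis. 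This replaces the $\sim k^2$ conditions by $\sim 4k$ real conditions, in line with the $4k-2$ real parameters carried by $d_1,\dots,d_p$.

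Next I would solve these equations by clearing the block one anti-diagonal at a time, from $i+j=p-1$ down to $i+j=0$. The key input is that the top anti-diagonal of $\bigl(T_{i,j}(v)\bigr)$ is controlled by a single number: from \eqref{sumT} together with the support property one gets $T_{m,p-m}(v)=(-1)^m T_{p,0}(v)$, and $T_{p,0}(v)\neq 0$ because in the inductive decomposition $v$ is chosen so that its cyclic subspace is symplectic, equivalently $H$ is non-degenerate on $E^v_\lambda$ so that the anti-triangular Gram array has non-zero anti-diagonal. Since the correction $d_m$ first affects $T_{i,j}(v')$ through the top-anti-diagonal moments $T_{i+m,j}(v)$ and $T_{i,j+m}(v)$, each anti-diagonal of the block is governed, at leading order, by a coefficient weighted by a non-zero multiple of $T_{p,0}(v)$; the clearing then proceeds recursively, with the residual real freedom left in the earlier $d_m$ (after the skew-Hermitian relations \eqref{eq:skewT} among diagonal and anti-diagonal entries are taken into account) supplying the directions not pinned down by the newest coefficient. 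For $\lambda=\pm1$ with $v$ real, all $T_{i,j}(v)$ are real, the diagonal conditions $T_{i,i}(v')=0$ hold automatically by antisymmetry of $\Omega$, and the whole elimination takes place over $\R$, yielding real $c_m$ and hence a real $v'$.

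The hard part will be exactly the solvability of this reduced system, i.e. verifying that, anti-diagonal by anti-diagonal, the real-linear map from the newly available parameters to the residual moments is invertible. Everything rests on the non-vanishing of the leading moment $T_{p,0}(v)$, i.e. on the non-degeneracy of $\Omega$ on $E^v_\lambda\oplus\overline{E^v_\lambda}$: without it the top anti-diagonal collapses and, as one already sees for $p=1$, a non-zero $T_{0,0}(v)$ can no longer be removed. The oddness of $p=2k-1$ is what makes the scheme close up cleanly, for then the skew-Hermitian form $iH$ on the $2k$-dimensional $E^v_\lambda$ has balanced signature $(k,k)$ and carries no unremovable ``central'' diagonal moment, so a cyclic Lagrangian half-flag — equivalently the sought vector $v'$ — exists.
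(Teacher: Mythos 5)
Your overall strategy coincides with the paper's: correct $v$ by adding multiples of $(A-\lambda\Id)^J v$, proceed by decreasing induction on the anti\-/diagonals $i+j$, and anchor everything on the non-vanishing of the top anti-diagonal $T_{m,p-m}(v)=\pm T_{p,0}(v)$ (a hypothesis which, as you rightly note, is implicit in the lemma and supplied in its applications by the choice of $v$ with $\widehat{Q}([v],[v])\neq 0$; the paper encodes it by rescaling so that $T_{k,k-1}(v)=d=\pm1$). The transformation law $T_{i,j}(v')=\sum_{a,b}d_a\overline{d_b}\,T_{i+a,j+b}(v)$ and the reduction of the $k\times k$ block to its last row and first column are both correct (up to an immaterial sign in $T_{m,p-m}(v)=(-1)^{m}T_{p,0}(v)$).

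However, the step you defer as ``the hard part'' is precisely the content of the lemma, and the way you frame it --- invertibility of the real-linear map from the new parameter to the residual moments --- would fail if pursued literally. Adding $\epsilon(A-\lambda\Id)^J v$ to $v$ changes the target entry $T_{k-J,k-1}$ by $\epsilon\,T_{k,k-1}(v)+\overline{\epsilon}\,T_{k-J,k+J-1}(v)$ (the quadratic term dies by \eqref{eq:zero}); both coefficients lie on the top anti-diagonal, so this is a map $\epsilon\mapsto a\epsilon+b\overline{\epsilon}$ with $|a|=|b|=|d|$, which is \emph{never} invertible over $\R^2$: its image is a single real line. So non-vanishing of $T_{p,0}(v)$ does not by itself make the system solvable; one must check that the obstruction $\alpha_J=T_{k-J,k-1}(v)$ actually lies in that one-dimensional image. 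This is where the paper does real work: combining the recurrence \eqref{sumT} along the anti-diagonal with the skew-Hermitian symmetry \eqref{eq:skewT} gives $T_{k-J,k-1}(v)=(-1)^{J-1}T_{k-1,k-J}(v)$, hence $\alpha_J$ is real for $J$ even and purely imaginary for $J$ odd, exactly matching the image of the map above (and one then verifies $\alpha_J-\tfrac{\alpha_J}{2}-(-1)^J\tfrac{\overline{\alpha_J}}{2}=0$ for the explicit choice $\epsilon=-\alpha_J/(2\lambda^Jd)$). You allude to the skew-Hermitian relations ``supplying the directions not pinned down,'' but without this reality-type computation the induction does not close, so the proposal as written has a genuine gap at its central step. (Your remarks on the real case $\lambda=\pm1$ and on the necessity of $T_{p,0}(v)\neq 0$ are correct.)
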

\begin{proof}
Observe  that 
\begin{eqnarray*}
T_{k,k-1}(v)&=& -T_{k,k}(v)-T_{k-1,k}(v) \quad \textrm{by } \eqref{eq:Aij}\\
&=&-T_{k-1,k}(v)  \quad \textrm{by } \eqref{eq:zero}\\
&=&\overline{T_{k,k-1}(v)}  \quad \textrm{by } \eqref{eq:skewT}
\end{eqnarray*}
 is real and can be put to $d=\pm 1$ by rescaling the vector.
We use  formulas \eqref{eq:Aij}  and  \eqref{eq:skewT} and we proceed by decreasing induction on $i+j$  as follows:
\begin{itemize}
\item if $T_{{k-1},{k-1}}(v)= \alpha_1$, this $\alpha_1$ is purely imaginary, we replace $v$ by $$v':=v-\frac{\alpha_1}{2\lambda d}(A-\lambda\Id)v;$$
clearly  $E_\lambda^{v'}=E_\lambda^v$ and  $T_{i,j}(v')=T_{i,j}(v)$  for $i+j\ge 2k-1$ but now 
$$T_{{k-1},{k-1}}(v')= \alpha_1
-\frac{\alpha_1}{2d}T_{k,k-1}(v)-\frac{\overline{\alpha_1}}{2d}T_{k-1,k}(v)=0;$$ 
 so we can now assume $T_{{k-1},{k-1}}(v)=0$; observe that if $\lambda$ is real  and $v$ is in $V$, then $\alpha_1=0$ and $v'=v$;
\item if $T_{{k-2},{k-1}}(v)= \alpha_2=-T_{{k-1},{k-2}}(v)$, this $\alpha_2$ is real and we replace $v$ by $$v-\frac{\alpha_2}{2\lambda^2d}(A-\lambda\Id)^{2}v;$$ the space $E_\lambda^v$ does not change
and the quantities $T_{i,j}(v)$ do not vary for $i+j\ge 2k-2$;
now $$T_{{k-2},{k-1}}(v')=\alpha_2-\frac{\alpha_2}{2d}T_{{k},{k-1}}(v)-\frac{\overline{\alpha_2}}{2d}T_{{k-2},{k+1}}(v)=0,$$hence also 
$T_{{k-1},{k-2}}(v')= 0$; observe that if $\lambda$ is real  and $v$ is in $V$, then  $v'$ is in $V$.
\item we now assume by induction to have a $J>0$ so  that $T_{i,j}(v)=0$ for all $0\le i,j\le k-1$ so that $i+j> 2k-1-J$;
\item if $T_{{k-J},{k-1}}(v)= \alpha_J$, then $T_{{k-J},{k-1}}(v)=(-1)^{J-1}T_{{k-1},{k-J}}(v)$
so that $\alpha_J$ is real when $J$ is even and is imaginary when $J$ is odd; we replace $v$ by $$v-\frac{\alpha_J}{2\lambda ^Jd}(A-\lambda\Id)^{J}v;$$ the space $E_\lambda^v$ does not change and the quantities $T_{i,j}(v)$ do not vary for $i+j\ge 2k-J$; but
 now 
 \begin{eqnarray*}
 T_{{k-J},{k-1}}(v')&=& \alpha_J -\frac{\alpha_J}{2d}T_{{k},{k-1}}(v)-\frac{\overline{\alpha_J}}{2d}T_{{k-J},{k+J-1}}(v)\\
 &=&\alpha_J-\frac{\alpha_J}{2}-(-1)^J\frac{\overline{\alpha_J}}{2}=0.
 \end{eqnarray*}
  Hence also 
$T_{{k-J+1},{k-2}}(v')= 0,\ldots ~T_{{k-1},{k-J+1}}(v')= 0$; so the induction proceeds. Observe that if $\lambda$ is real and $v$ is in $V$ then $v'$ is in $V$.
\end{itemize}
\end{proof}
We shall use repeatedly  that a $n\times n$ block triangular  symplectic matrix is of the form
\begin{equation}\label{mattriang}
	A'= \left(\begin{array}{cc}
		B & C\\
		0& D
	\end{array}\right)
	\in \Sp(2n,\R) \Leftrightarrow
	\left\{\begin{array}{l}
		B=(D^{\tau})^{-1} \\
		C=(D^{\tau})^{-1} S \, \textrm{ with}\, S \, \textrm{ symmetric. }
	\end{array}\right.
\end{equation}

%%%%%%%%%%%%%%%%%%%%%%%%%%%
\section{Normal forms  for  $A_{\vert V_{[\lambda]}}$  when $\lambda\notin S^1 .$}

As before, $p$ denotes the largest integer such that $(A-\lambda\id)^{p}$ does not vanish identically
on the generalized eigenspace $E_\lambda$.
Let us choose an element  $v\in E_\lambda$ and an element $w\in E_{\frac{1}{\lambda}}$ such that 
$$
	\widetilde{Q}_p \bigl([v],[w]\bigr)=\Omega \bigl( (A-\lambda\Id)^pv,w \bigr)\ne 0.
	$$
Let us consider the smallest $A$-invariant subspace $E^v_\lambda$ of $E_\lambda$  containing $v$; it is of dimension $p+1$ and a basis is given by 
\begin{equation*}
	\bigl\{a_{0}:=v,\ldots, a_i:=(A-\lambda\Id)^{i}v,\ldots, a_p:=(A-\lambda\Id)^pv \bigr\}.
\end{equation*}
Observe that $Aa_i=(A-{\lambda}\Id)a_i+{\lambda} a_i$ so that $Aa_i={\lambda} a_i+ a_{i+1}$ for $i<p$ and $Aa_p=a_p$.

Similarly, we consider the smallest $A$-invariant subspace $E^w_{\frac{1}{\lambda}}$ of $E_{\frac{1}{\lambda}}$  containing $w$; it is also of dimension $p+1$  and a basis is given by 
\begin{equation*}
	\left\{b_0:=w, \ldots, b_{j}:=\left(A-\tfrac{1}{\lambda}\Id\right)^{j}w,\ldots b_{p}:=\left(A-\tfrac{1}{\lambda}\Id\right)^pw\right\} .
\end{equation*}
One has
\begin{itemize}
	\item[~] $\Omega(a_i,a_j)=0$ and $\Omega(b_i,b_j)=0$ because $\Omega(E_\lambda,E_\mu)=0$ if $\lambda\mu\ne 1$;
	\item[~] $\Omega(a_i,b_j)=0$ if $i+j>p$  by equation (\ref{eq:zero}) ;
         \item[~] $\Omega(a_i,b_{p-i})=\bigl(\frac{-1}{\lambda^2}\bigr)^{p-i}\Omega \Bigl(\bigl( A-{{\lambda}}\Id\bigr)^pv,w\Bigr)$ by equation (\ref{eq:p}) and is non zero by the choice of $v,w .$
\end{itemize}
The matrix representing $\Omega$	in the basis  $\{ b_p,\ldots,b_{0}, a_0,\ldots,a_{p}\}$ is thus of the form
\begin{equation*}
\left(\begin{smallmatrix}
\begin{smallmatrix}
	0&&0\\
	&\ddots&\\
	0&&0\\
\end{smallmatrix} &\,\vline\,&\begin{smallmatrix}
	\overline{\ast}&&0\\
	&\ddots&\\
	\ast&&\overline{\ast}\\
\end{smallmatrix}\\ ~\\ \hline \\
\begin{smallmatrix}
	\overline{\ast}&&\ast\\
	&\ddots&\\
	0&&\overline{\ast}
\end{smallmatrix}&\,\vline\,&\begin{smallmatrix}
	0&&0\\
	&\ddots&\\
	0&&0\\
\end{smallmatrix}
\end{smallmatrix}\right)
\end{equation*}
with non vanishing $\overline{\ast}$.
Hence $\Omega$ is non degenerate on  $E^v_\lambda\oplus E^w_{\frac{1}{\lambda}}$ which is thus a symplectic $A$-invariant subspace.

We now construct a symplectic  basis $\left\{b'_p,\ldots, b'_{0},a_0,\ldots,a_{p}\right\}$ of $E^v_\lambda\oplus E^w_{\frac{1}{\lambda}}$, 
extending  $\left\{a_0,\ldots,a_{p}\right\}$, using a Gram-Schmidt procedure on the $b_i$'s. This gives a normal form for $A$ on $E^v_\lambda\oplus E^w_{\frac{1}{\lambda}}$.

If $\lambda$ is real, we  take $v, w $ in  the real generalized eigenspaces $E^{\R}_\lambda$ and $E^{\R}_{\frac{1}{\lambda}}$ and we obtain a symplectic basis of the real $A$-invariant symplectic vector space ,
$E^{\R v}_\lambda\oplus E^{\R w}_{\frac{1}{\lambda}}$. 
If $\lambda$ is not real, one considers the basis of $E^{\overline{v}}_{\overline\lambda}\oplus E^{\overline{w}}_{\frac{1}{\overline{\lambda}}}$ defined by the conjugate vectors
$\{\overline{b'_p},\ldots,\overline{b'_0},\overline{a_0},\ldots, \overline{a_{p}}\}$ 
and this  yields a conjugate normal form on $E_{\overline{\lambda}}\oplus  E_{\frac{1}{\overline{\lambda}}}$, hence a normal form on $W_{[\lambda]} $ and this will induce a real normal form on
$V_{[\lambda]}$.\\

We choose $v$ and $w$ such that $\Omega \Bigl( \bigl(A-{\frac{1}{\lambda}}\Id\bigr)^pw,v \Bigr) = 1.$
We define inductively   on $j$
\begin{itemize}
\item[~] $b'_{p}:= \frac{1}{\Omega(b_p,a_0)}b_p=b_p $;
\item[~] $b'_{p-j}=\frac{1}{\Omega(b_{p-j},a_j)} \bigl( b_{p-j}-\sum_{k<j}\Omega(b_{p-j}, a_k) b'_{p-k}\bigr),$\\
so that any $b'_j$ is a linear combination of the $b_r$ with $r\ge j$.
\end{itemize}	
In the symplectic  basis $\left\{b'_p,\ldots, b'_{0},a_0,\ldots,a_{p}\right\}$ the matrix representing $A$ is
\begin{equation*}
	\left(\begin{array}{cc}
		B&0\\
		0&J({\lambda},p+1)^\tau
	\end{array}\right)
	\end{equation*}
where
\begin{equation}\label{eq:J}
	J(\lambda,m)=
	\left(\begin{smallmatrix}
		\lambda & 1 &  &  &  &  & \\
		 & \lambda & 1 &  &  & \makebox(0,0)0  & \\
		 &  & \lambda & 1 &  &  & \\
		 &  &  & \ddots & \ddots &  & \\
		 &\makebox(0,0)0  &  &  & \lambda & 1 & \\
		 &  &   &  &  & \lambda & 1\\
		 &  &  &  &  &  & \lambda
	\end{smallmatrix}\right)
\end{equation}

 is the elementary $m\times m$ Jordan matrix associated to $\lambda$.
 Since the matrix is symplectic, $B$ is the transpose of the inverse of  $J({\lambda},p+1)^\tau$ by (\ref{mattriang}),  so $B= J({\lambda},p+1)^{-1}$.\\This is the normal form  for $A$ restricted to $E^v_\lambda\oplus E^w_{\frac{1}{\lambda}}$. \\
If $\lambda=re^{i\phi}\notin \R$ we consider the symplectic basis $\{b'_p,\ldots,b'_0,a_0,\ldots, a_{p}\}$ of $E^v_\lambda\oplus E^w_{\frac{1}{\lambda}}$ as above and the conjugate symplectic basis $\{\overline{b'_p},\ldots,\overline{b'_0},\overline{a_0},\ldots, \overline{a_p}\}$ of $E^{\overline{v}}_{\overline{\lambda}}\oplus E^{\overline{w}}_{\frac{1}{\overline\lambda}}.$ Writing $b'_j=\frac{1}{\sqrt{2}}(u_j+iv_j)$ and 
$a_j=\frac{1}{\sqrt{2}}(w_j-ix_j)$
for all $0\le j\le p$ with the vectors $u_j,v_j,w_j, x_j$ in the real vector space $V,$ we get a symplectic basis $\!\{u_p,v_p\ldots,u_{0}, v_{0},w_0,x_0\ldots,w_{p}, x_{p}\}$ of the real subspace of $V$ whose complexification is $E^v_\lambda\oplus E^w_{\frac{1}{\lambda}}\oplus E^{\overline{v}}_{\overline{\lambda}}\oplus E^{\overline{w}}_{\frac{1}{\overline\lambda}}$. In this basis, the matrix representing $A$ is
\begin{equation*}
	\left(\begin{array}{cc}
		J_\R\bigr(\overline{\lambda},2(p+1)\bigl)^{-1}&0\\
		0&J_\R\bigr(\overline{\lambda},2(p+1)\bigl)^\tau
	\end{array}\right)
\end{equation*}
where $J_\R(re^{i\phi},2m)$ is the $2m\times 2m$ matrix written in terms of $2\times 2$ matrices as
\begin{equation}\label{eq:JR}
J_\R(re^{i\phi},2m):=	\left(\begin{smallmatrix}
		R(re^{i\phi}) & \Id &  &  &  &  &\\
		 &R(re^{i\phi}) & \Id &  &  &\makebox(0,0)0  & \\
		 &  & R(re^{i\phi})& \Id &  &  & \\
		 &  &  & \ddots & \ddots &  & \\
		 &\makebox(0,0)0  &  &  & R(re^{i\phi}) &  \Id & \\
		 &  &  &  &  &R(re^{i\phi}) &  \Id \\
		 &  &  &  &  &  & R(re^{i\phi})
	\end{smallmatrix}\right)
\end{equation}
 with $R(re^{i\phi})=\left(\begin{smallmatrix}
		r\cos \phi&-r\sin \phi\\
		r\sin \phi&r\cos \phi
	\end{smallmatrix}\right)$.
By induction, we get
\begin{theorem}[Normal form for $A_{\vert V_{[\lambda]}}$ for $\lambda\notin S^1.$]\label{thm:normal1}
Let $\lambda\notin S^1$ be an eigenvalue of $A$. Denote $k:=\dim_\C \Ker (A-\lambda\Id)$ (on $V^\C$) and $p$  the smallest integer so that $(A-\lambda\Id)^{p+1}$ is identically zero on the generalized eigenspace $E_\lambda$.
\begin{itemize}
\item
If $\lambda\neq \pm1$ is a real eigenvalue of $A$,
there exists a symplectic basis of $V_{[\lambda]}$
in which the matrix representing  the restriction of $A$ to $V_{[\lambda]}$ is a symplectic direct sum of $k$
matrices of the form 
$$
        \left(\begin{array}{cc}
		J({\lambda},p_j+1)^{-1}&0\\
		0& J({\lambda},p_j+1)^{\tau}
	\end{array}\right)
$$
	 with $p=p_1\ge p_2\ge \dots \ge p_k$   and $J(\lambda,k)$ defined by (\ref{eq:J}). To eliminate the ambiguity in the choice of $\lambda$ in ${[\lambda]}=\{ \lambda,\lambda^{-1}\}$ we can consider the
 real eigenvalue  such that $\lambda>1$. The size of the blocks is determined knowing the dimension
$\dim\left( \Ker (A-\lambda\Id)^r\right)$ for each $r\ge 1$. 
\item
If $\lambda=re^{i\phi}\notin(S^1\cup \R)$ is a complex eigenvalue of $A$, there exists a symplectic basis of $V_{[\lambda]}$ in which the matrix representing the restriction of $A$ to $V_{[\lambda]}$ is a symplectic direct sum of $k$
matrices of the form
$$
	\left(\begin{array}{cc}
		J_\R\bigl(re^{-i\phi},2(p_j+1)\bigr)^{-1}&0\\
		0&J_\R\bigl(re^{-i\phi},2(p_j+1)\bigr)^\tau
	\end{array}\right)
$$
with $p=p_1\ge p_2\ge \dots \ge p_k$ and $J_\R(re^{i\phi},k)$ defined by (\ref{eq:JR}). To eliminate the ambiguity in the choice of $\lambda$ in ${[\lambda]}=\{ \lambda,\lambda^{-1}, \overline{\lambda},\overline{\lambda}^{-1}\}$ we can choose the 	 eigenvalue $\lambda$ with a positive imaginary part and a modulus greater than $1$. The size of the blocks is determined, knowing the dimension
$\dim_\C\left( \Ker (A-\lambda\Id)^r\right)$ for each $r\ge 1$. 
\end{itemize}
This normal form is unique, when  a choice of $\lambda$ in the set $[\lambda]$ is fixed.
\end{theorem}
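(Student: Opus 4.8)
The plan is to upgrade the single-block construction carried out just before the statement into a full decomposition by induction on $\dim V_{[\lambda]}$, and then to read off the number of blocks, their sizes, and uniqueness from the dimensions $\dim\Ker(A-\lambda\Id)^r$. The base step is already in place: by the non-degeneracy of $\widetilde{Q}_p$ (Lemma~\ref{lem:tech1}) one picks $v\in E_\lambda$ with $(A-\lambda\Id)^pv\ne0$ and $w\in E_{1/\lambda}$ with $\widetilde{Q}_p([v],[w])\ne0$, and the Gram--Schmidt argument preceding the theorem exhibits a symplectic $A$-invariant subspace $E^v_\lambda\oplus E^w_{1/\lambda}$ (real dimension $2(p+1)$, or its realification $E^v_\lambda\oplus E^w_{1/\lambda}\oplus E^{\overline v}_{\overline\lambda}\oplus E^{\overline w}_{1/\overline\lambda}$ when $\lambda\notin\R$) on which $A$ is the announced block, the corner $J(\lambda,p+1)^\tau$ forcing the opposite corner to be $J(\lambda,p+1)^{-1}$ through the triangular characterization \eqref{mattriang}.

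First I would carry out the inductive step by passing to the symplectic orthogonal complement $V':=(E^v_\lambda\oplus E^w_{1/\lambda})^{\perp_\Omega}$ in $V_{[\lambda]}$. As recorded in the preliminaries, $V'$ is again symplectic, $A$-invariant and stable under conjugation, with generalized eigenspaces $E'_\mu=V^{'\C}\cap E_\mu$ and maximal index $p'\le p$; since $A|_{V'}\in\Sp(V',\Omega|_{V'})$, Lemma~\ref{lem:tech1} applies verbatim to $V'$ and the construction repeats. Iterating peels off one maximal Jordan chain at each stage, giving blocks whose indices form a non-increasing sequence $p=p_1\ge p_2\ge\cdots$, and the process stops when the complement is $\{0\}$.

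It then remains to do the bookkeeping. Each block $E^{v_j}_\lambda$ is a single Jordan chain, so it contributes exactly the line spanned by $(A-\lambda\Id)^{p_j}v_j$ to $\Ker(A-\lambda\Id)$; these lines are independent, so the number of blocks equals $k=\dim_\C\Ker(A-\lambda\Id)$. The multiset $\{p_j+1\}$ is then recovered from $\dim_\C\Ker(A-\lambda\Id)^r=\sum_j\min(r,p_j+1)$, the number of blocks of size $\ge r$ being $\dim\Ker(A-\lambda\Id)^r-\dim\Ker(A-\lambda\Id)^{r-1}$. This also yields uniqueness: once $\lambda$ is fixed within $[\lambda]$ (say $\lambda>1$ in the real case, or $|\lambda|>1$ with positive imaginary part in the complex case), the sizes are pinned down by the invariants $\dim\Ker(A-\lambda\Id)^r$, so the form is unique up to the permutation of blocks normalized by the ordering convention.

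The hard part, such as it is, will be to confirm that the induction is well posed: at each stage the reduced space $V'$ must again be a symplectic $A$-invariant subspace of the same type (so that $V^{'\C}=E'_\lambda\oplus E'_{1/\lambda}\oplus E'_{\overline\lambda}\oplus E'_{1/\overline\lambda}$) on which the technical lemmas still apply and a fresh pair $(v',w')$ realizing the reduced index $p'$ exists. This is precisely what the preliminaries provide. The realification in the complex case and the identification of $J(\lambda,p+1)^\tau$ with $J_\R(re^{-i\phi},2(p+1))^\tau$ are then routine, using the real and imaginary parts of the $b'_j$ and $a_j$ exactly as in the preamble.
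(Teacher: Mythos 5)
Your proposal is correct and follows the paper's own route: the single-block construction via $\widetilde{Q}_p$ and Gram--Schmidt is the base case, the inductive step is exactly the passage to the symplectic orthogonal complement set up in the preliminaries (where $E'_\mu=V'^{\,\C}\cap E_\mu$ and $p'\le p$ are recorded), and the block count and sizes are read off from $\dim\Ker(A-\lambda\Id)^r$ as you describe. The paper compresses all of this into the phrase ``by induction, we get,'' so your write-up simply makes explicit what the author leaves implicit.
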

%%%%%%%%%
\section{Normal forms  for  $A_{\vert V_{[\lambda]}}$  when $\lambda=\pm 1.$}

In this situation $[\lambda]=\{ \lambda\}$ and  $V_{[\lambda]}$ is  the generalized real eigenspace of eigenvalue $\lambda$, still denoted --with a slight abuse of notation-- $E_\lambda$.  Again, $p$ denotes the largest integer such that $(A-\lambda\id)^{p}$ does not vanish identically
on  $E_\lambda$.
We consider $\widetilde{Q}_p :\raisebox{.2ex}{$E_\lambda$}/\raisebox{-.2ex}{$\Ker(A-\lambda\Id)^p$} \times 	\raisebox{.2ex}{$E_\lambda$}/\raisebox{-.2ex}{$\Ker(A-\lambda\Id)^p$}\rightarrow \R$
the non degenerate form defined by $\widetilde{Q}_p \bigl([v],[w]\bigr)=\Omega \bigl( (A-\lambda\Id)^p v,w \bigr).$
We see directly from equation (\ref{eq:Qbiendef}) that $\widetilde{Q}_p$ is symmetric if $p$ is odd and antisymmetric if $p$ is even.\\
%%%%%%%
\subsection{ If $p=2k-1$ is odd}
	we choose $v\in E_\lambda$ such that
\begin{equation*}
	\widetilde{Q}\bigl([v],[v]\bigr)=\Omega \bigl( (A-\lambda\Id)^p v,v \bigr) \ne 0
\end{equation*}
and consider the smallest $A$-invariant subspace $E^v_\lambda$ of $E_\lambda$  containing $v$; it is  spanned by
\begin{equation*}
	\bigl\{ a_p:=(A-\lambda\Id)^pv,\ldots, a_i:=(A-\lambda\Id)^{i}v,\ldots, a_{0}:= v \bigr\}.
\end{equation*}
We have
\begin{itemize}
	\item[~] $\Omega(a_i,a_j)=0$ if $i+j\ge p+1(=2k)$ by equation (\ref{eq:zero});
	\item[~] $\Omega(a_i,a_{p-i})\ne0;$ by equation(\ref{eq:p}) and  by the choice of $v .$
\end{itemize}

\noindent Hence $E^v_\lambda$ is a symplectic subspace because,  in the  basis defined by the $e_i $'s, $\Omega$ has the triangular form
$\left(\begin{smallmatrix}
	0&&\overline{\ast}\\
	&\adots&\\
	\overline{\ast}&&\ast
\end{smallmatrix}\right)$
and has a non-zero determinant.\\
We can choose $v$ in $E_\lambda\subset V$ so that $\Omega \bigl((A-\lambda\Id)^{k}v,(A-\lambda\Id)^{k-1}v \bigr) = \lambda \s$ with $\s=\pm 1$ by rescaling the vector and 
one may further assume, by lemma \ref{lem:Todd}, that 
$$
T_{i,j}(v)=\frac{1}{\lambda^i}\frac{1}{\lambda^j}\Omega \bigl((A-\lambda\Id)^{i}v,(A-\lambda\Id)^{j}v \bigr) = 0\quad \textrm{for all }~0\le i,j\le k-1.
$$
We now construct a symplectic  basis $\left\{a'_p,\ldots, a'_{k},a_0,\ldots,a_{k-1}\right\}$ of $E^v_\lambda$, extending\\
$\left\{a_0,\ldots,a_{k-1}\right\}$, by a Gram-Schmidt procedure,
having chosen  $v$ as above.
We  define inductively   on $0\le j\le k-1$
\begin{itemize}
\item[~] $a'_{p}:= \frac{1}{\Omega(a_p,a_0)}a_p $;
\item[~] $a'_{p-j}=\frac{1}{\Omega(a_{p-j},a_j)} \bigl( a_{p-j}-\sum_{k<j}\Omega(a_{p-j}, a_k)a'_{p-k}\bigr),$\\
so that any $a'_j$ is a linear combination of the $a_r$'s with $r\ge j$ and in particular $a'_k=\frac{1}{\s\lambda}a_k+\sum_{j=1}^{k-1}c_ja_{k+j}$ .
\end{itemize}	
In the symplectic  basis $\left\{a'_p,\ldots, a'_{k},a_0,\ldots,a_{k-1}\right\}$ the matrix representing $A$ is
\begin{equation*}
A'=	\left(\begin{array}{cc}
		B&C\\
		0&J(\lambda,k)^\tau
	\end{array}\right)
	\end{equation*}
with $J(\lambda,m)$ defined by (\ref{eq:J}) and with $C$ identically zero except for the last column, and the coefficient $C^k_k=\s \lambda$.
 Since the matrix is symplectic, $B$ is the transpose of the inverse of  $J(\lambda,p+1)^\tau$ by (\ref{mattriang}),  so $B= J(\lambda,k)^{-1}$
 and $J(\lambda,k)C$ is symmetric with zeroes except in the last column, hence diagonal of the form $\operatorname{diag} \bigl(0,\ldots,0, \s\bigr)$.
Thus
\begin{equation*}
	\left(\begin{array}{cc}
		J(\lambda,k)^{-1}&{J(\lambda,k)^{-1}\textrm{diag} \bigl(0,\ldots,0, \s\bigr)}\\
		0&J(\lambda,k)^\tau
	\end{array}\right),
\end{equation*}
with $\s= \pm 1$, is the normal form of $A$ restricted to $E^v_\lambda$. Recall that
$$
\s=\lambda^{-1}\,\Omega \bigl((A-\lambda\Id)^{k}v,(A-\lambda\Id)^{k-1}v \bigr).
$$
%%%%%%%%%%%
\subsection{ If $p=2k$ is even}
we choose $v$ and $w$ in $E_\lambda$ such that
\begin{equation*}
	\widetilde{Q}\bigl([v],[w]\bigr)=\Omega\bigl((A-\lambda\Id)^p v,w\bigr)=\lambda^p=1
\end{equation*}
and we consider the smallest $A$-invariant subspace $E^v_\lambda\oplus E^w_\lambda$ of $E_\lambda$  containing $v$ and $w.$
It is of dimension $4k+2.$
Remark that $\Omega\bigl((A-\lambda\Id)^p v,v\bigr)=0$.  
We can choose $v$ so that
$$
T_{r,s}(v)=\frac{1}{\lambda^{r+s}}\Omega \bigl((A-\lambda\Id)^{r}v,(A-\lambda\Id)^{s}v \bigr) = 0\quad \textrm{for all }~r,s.
$$
Indeed, by formula \eqref{eq:Aij}  we have $T_{i,j}(v)=- T_{i+1,j}(v)-T_{i+1,j-1}(v)$.
Observe that $T_{i,j}(v)=-{T_{j,i}(v)}$ so that  $T_{i,i}(v)=0$ and $T_{j,i}(v)=- T_{j,i+1}(v)-T_{j-1,i+1}(v)$.
We   proceed by induction, as in lemma \ref{lem:Todd} : 
\begin{itemize}
\item
  $T_{p,0}(v)=0$ implies
$T_{p-r,r}(v)=0$ for all $0\le r\le p$ by equation \eqref{eq:p}.
\item We assume by decreasing induction on $J$, starting from $J=p$, that     we have 
$T_{{i},j}(v)=0$ for all $i+ j\ge J$. 
Then we have $T_{{J-1-s} ,s}(v)=- T_{{J-1-s} ,s+1}(v)-T_{{J-2-s} ,s+1}(v)$; the first term on the righthand side 
vanishes by the induction hypothesis,
so $T_{{J-1} ,0}(v)=(-1)^{s}T_{J-1-s,s}(v)=(-1)^{J-1}T_{0,J-1}(v)=(-1)^JT_{J-1,0}$.

If $T_{J-1,0}(v)=\alpha\neq 0$, $J$ must be even  and we replace $v$ by 
$$
	v'=v+\tfrac{\alpha}{2\lambda^{p-J+1}}(A-\lambda\Id)^{p-J+1}w.
$$
Then 
$v'\in E^v_\lambda\oplus E^w_\lambda , \,E^v_\lambda\oplus E^w_\lambda=E^{v'}_\lambda\oplus E^w_\lambda,\,  \Omega\bigl((A-\lambda\Id)^p v',w\bigr)=\lambda^p$ and 
$T_{i,j}(v')=T_{i,j}(v)=0$ for all $i+j\ge J$ but now
\begin{eqnarray*}
T_{J-1,0} (v')&=&T_{J-1,0} (v)+\tfrac{\alpha}{2\lambda^p}\Omega \bigl( (A-\lambda\Id)^{p}w,v\bigr)\\
&& \ \ \ \ +\tfrac{\alpha}{2\lambda^{p}}\Omega \bigl( (A-\lambda\Id)^{J-1}v,(A-\lambda\Id)^{p-J+1}w\bigr)\\
&& \ \ \ \ +\tfrac{\alpha^2}{4\lambda^p}\Omega \bigl( (A-\lambda\Id)^{p}w,(A-\lambda\Id)^{p-J+1}w\bigr)\\
&=& \alpha-\frac{\alpha}{2}-\frac{\alpha}{2}=0
\end{eqnarray*}
 so that $T_{i,j} (v')=0$ for all $i+j\ge J-1$ and the induction proceeds.
\end{itemize}
We assume from now on that we have chosen  $v$ and $w$ in $E_\lambda$ so that\\
$\Omega\bigl((A-\lambda\Id)^p v,w\bigr)=1$ and $\Omega\bigl((A-\lambda\Id)^r v,(A-\frac{1}{\lambda}\Id)^{s}v\bigr)=0$
for all $r,s$.\\ We can proceed similarly with $w$ so we can thus furthermore assume  that \\
$\Omega\Bigl((A-\lambda\Id)^j w,\bigl(A-{\lambda}\Id\bigr)^{k}w\Bigr)=0$ for all $j,k$.

A basis of $E^v_\lambda\oplus E^w_\lambda$ is given by
$$\bigl\{a_p=(A-\lambda\Id)^pv, \ldots,a_0=v, 
b_0=w, \ldots, b_{p}=(A-\lambda\Id)^pw\bigr\}.$$
We have
\begin{itemize}
	\item[~] $\Omega(a_i,a_j)=0$ and $\Omega(b_i,b_j)=0$ by the choice of $v$ and $w$;
	\item[~] $\Omega(a_i,b_j)=0$ if $i+j>p$  by equation (\ref{eq:zero}) ;
         \item[~] $\Omega(a_i,b_{p-i})=\neq 0$ by equation (\ref{eq:p}) and the choice of
          of $v,w .$
\end{itemize}
The  matrix representing $\Omega$ has the form
$\left(\begin{smallmatrix}
0&\vline&\begin{smallmatrix}
	\overline{\ast}&&0\\
	&\ddots&\\
	\ast&&\overline{\ast}\\
	~&~&~
\end{smallmatrix}\\
\hline
\begin{smallmatrix}
         ~&~&~\\
	\overline{\ast}&&\ast\\
	&\ddots&\\
	0&&\overline{\ast}
\end{smallmatrix}&\vline&0\\
\end{smallmatrix}\right)$
hence is  non singular and the subspace $E^v_\lambda\oplus E^w_\lambda$ is symplectic.
We now construct a symplectic  basis $\left\{a'_p,\ldots, a'_{0},b_0,\ldots,b_{p}\right\}$ of $E^v_\lambda\oplus E^w_{\frac{1}{\lambda}}$, 
extending  $\left\{b_0,\ldots,b_{p}\right\}$, using a Gram-Schmidt procedure on the $a_i$'s. 
We define inductively   on $j$
\begin{itemize}
\item[~] $a'_{p}:= \frac{1}{\Omega(a_p,b_0)}a_p $;
\item[~] $a'_{p-j}=\frac{1}{\Omega(a_{p-j},b_j)} \bigl( a_{p-j}-\sum_{k<j}\Omega(a_{p-j}, b_k)a'_{p-k}\bigr),$\\
so that any $a'_j$ is a linear combination of the $a'_k$ with $k\ge j$.
\end{itemize}	
In the symplectic  basis $\left\{a'_p,\ldots, a'_{0},b_0,\ldots,b_{p}\right\}$ the matrix representing $A$ is
\begin{equation*}
	\left(\begin{array}{cc}
		B&0\\
		0&J(\lambda,p+1)^\tau
	\end{array}\right) .
\end{equation*}
Hence,  the matrix 
$$
\left(\begin{array}{cc}
		J(\lambda,p+1)^{-1}&0\\
		0& J(\lambda,p+1)^{\tau}
	\end{array}\right)
$$
is a normal form for $A$ restricted to $E^v_\lambda\oplus E^w_\lambda$. Thus we have:

\begin{theorem}[Normal form for $A_{\vert V_{[\lambda]}}$ for $\lambda=\pm 1.$]\label{normalforms1}
Let $\lambda=\pm 1$ be an eigenvalue of $A$. 
There exists a symplectic basis of $V_{[\lambda]}$
in which the matrix representing the restriction of $A$ to $V_{[\lambda]}$ is a symplectic direct sum of 
matrices of the form 
$$
\left(\begin{array}{cc}
J(\lambda,r_j)^{-1}&C(r_j,\s_j,\lambda)\\
		0& J(\lambda,r_j)^{\tau}
		\end{array}\right)
$$
where $C(r_j,\s_j,\lambda):=J(\lambda,r_j)^{-1} \operatorname{diag}\bigl(0,\ldots,0, \s_j\bigr)$  with $\s_j\in \{0,1,-1\}$.
If $\s_j=0$,  then $r_j$ is odd.
The dimension of the eigenspace of eigenvalue $1$ is given by $2\card \{j \,\vert\, \s_j=0\}+\card\{j \,\vert \,\s_j\neq0\}$.
\end{theorem}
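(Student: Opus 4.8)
The plan is to prove the theorem by induction on $\dim V_{[\lambda]}=\dim E_\lambda$, splitting off one block at each stage. Since $\lambda=\pm1$ is real, $E_\lambda$ is a real $A$-invariant symplectic subspace and every vector and subspace produced below can be kept real (this is exactly the real-vector clause of Lemma \ref{lem:Todd}), so there is no need to complexify or to track complex conjugation.

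For the inductive step let $p$ be the largest integer with $(A-\lambda\Id)^p$ not identically zero on $E_\lambda$; the decisive construction has in fact already been carried out in the two subsections above, and the proof only has to package it. If $p=2k-1$ is odd, then $\widetilde{Q}_p$ is symmetric and non-degenerate (Lemma \ref{lem:tech1} together with \eqref{eq:Qbiendef}), so there is a $v$ with $\widetilde{Q}_p([v],[v])\neq0$; after rescaling and applying Lemma \ref{lem:Todd} to annihilate the lower $T_{i,j}(v)$, the subsection for odd $p$ yields a symplectic $A$-invariant cyclic subspace $Z=E^v_\lambda$ of dimension $p+1=2k$ on which $A$ has the stated normal form with $r_j=k$ and $\s_j=\lambda^{-1}\Omega\bigl((A-\lambda\Id)^kv,(A-\lambda\Id)^{k-1}v\bigr)=\pm1$. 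If $p=2k$ is even, then $\widetilde{Q}_p$ is antisymmetric, so one picks a pair $v,w$ with $\widetilde{Q}_p([v],[w])=1$; the subsection for even $p$ produces a symplectic $A$-invariant subspace $Z=E^v_\lambda\oplus E^w_\lambda$ of dimension $2(p+1)$ carrying the normal form with $\s_j=0$ and $r_j=p+1=2k+1$. This is exactly the origin of the implication ``$\s_j=0\Rightarrow r_j$ odd'': a vanishing sign can only come from an even-$p$ peeling, where $r_j=p+1$ is forced to be odd, whereas an odd-$p$ peeling always gives $\s_j=\pm1$.

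In either case $Z$ is symplectic and $A$-invariant, so by the discussion in the Preliminaries its symplectic orthogonal complement $Z^{\perp_\Omega}$ is again a real symplectic $A$-invariant subspace, of strictly smaller dimension and with associated integer $p'\le p$. Applying the induction hypothesis to $A|_{Z^{\perp_\Omega}}$ and recombining through the symplectic direct sum $\diamond$ produces the asserted decomposition of $A|_{V_{[\lambda]}}$. For the dimension count I evaluate the contribution of each block to $\Ker(A-\lambda\Id)$: a block with $\s_j\neq0$ is the matrix of $A$ on the cyclic space $E^v_\lambda$, hence conjugate to a single Jordan block, contributing $1$; a block with $\s_j=0$ is, in its symplectic basis, $J(\lambda,r_j)^{-1}\oplus J(\lambda,r_j)^\tau$, two Jordan blocks contributing $2$. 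Summing over the blocks gives $2\,\card\{j\mid\s_j=0\}+\card\{j\mid\s_j\neq0\}$, as claimed.

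The genuinely delicate work lies in the two subsections, not in the theorem: the hard point is the normalization of the generating vectors by Lemma \ref{lem:Todd} (and the analogous reduction for $w$ when $p$ is even), which is what collapses the off-diagonal corner from an arbitrary symmetric matrix, allowed a priori by \eqref{mattriang}, down to $\operatorname{diag}(0,\ldots,0,\s)$. Granting those constructions, the only things to verify here are that repeatedly splitting off $Z$ and replacing $E_\lambda$ by $Z^{\perp_\Omega}$ is consistent and terminates; consistency is guaranteed by Lemma \ref{lem:orthokerim} and the bound $p'\le p$, and termination is automatic since the dimension drops at each step.
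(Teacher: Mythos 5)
Your proof is correct and takes essentially the same route as the paper: the paper likewise obtains Theorem \ref{normalforms1} by packaging the odd-$p$ construction (a $2k$-dimensional cyclic symplectic subspace $E^v_\lambda$ with $\s=\pm1$, normalized via Lemma \ref{lem:Todd}) and the even-$p$ construction (the $2(p+1)$-dimensional subspace $E^v_\lambda\oplus E^w_\lambda$ with $\s=0$ and $r_j=p+1$ odd), then splitting off the symplectic orthogonal complement and inducting as set up in the Preliminaries. Your explicit verification of the eigenspace dimension count is a detail the paper states without proof, but it is the same argument.
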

\begin{definition}
Given $\lambda \in \{\pm 1\}$,  we define, for any integer $k\ge 1$,  a bilinear form $\hat{Q}^\lambda_{2k}$ on 
$\Ker\left( (A-\lambda\id)^{2k}\right) $:
\begin{eqnarray}
\hat{Q}^\lambda_{2k} &:& \Ker\left( (A-\lambda\id)^{2k}\right)\times \Ker\left( (A-\lambda\id)^{2k}\right)\rightarrow \R\nonumber\\
&&(v,w)\mapsto \lambda\, \Omega\bigl((A-\lambda\id)^{k}v,(A-\lambda\id)^{k-1}w\bigr).
\end {eqnarray}
It is symmetric. 
\end{definition}

\begin{proposition}\label{sumd}
Given $\lambda \in \{\pm 1\}$, the number of positive (resp. negative) eigenvalues of the symmetric $2$-form $\hat{Q}^\lambda_{2k}$ is equal to the number
of $s_j$ equal to $+1$ (resp. $-1$) arising in blocks of dimension $2k$ (i.e. with corresponding $r_j=k$) in the normal decomposition of $A$ on $V_{[\lambda]}$ given in theorem \ref{normalforms1}.\\
On $V_{[\lambda]}$, we have:
\begin{equation}
\sum_j \s_j=\sum_{k=1}^{dim V}\operatorname{Signature}(\hat{Q}^\lambda_{2k})
\end{equation}
\end{proposition}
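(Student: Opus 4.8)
The plan is to show that the bilinear form $\hat{Q}^\lambda_{2k}$ is block-diagonalized by the normal form decomposition of Theorem~\ref{normalforms1}, and then to read off its signature block by block. First I would decompose $V_{[\lambda]}$ as a symplectic direct sum of the elementary blocks $E^{v_j}_\lambda$ (for odd $r_j=k$, contributing a sign $\s_j$) and $E^{v_j}_\lambda\oplus E^{w_j}_\lambda$ (for even-height blocks, where $\s_j=0$) produced in the proof above. The key observation is that $\hat{Q}^\lambda_{2k}$ is defined using only $\Omega$, powers of $(A-\lambda\id)$, and the eigenvalue $\lambda$, all of which respect this symplectic-orthogonal decomposition; hence I would first verify that distinct blocks are $\hat{Q}^\lambda_{2k}$-orthogonal. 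This follows because $\Omega(E^{v_i}_\lambda, E^{v_j}_\lambda)=0$ for $i\ne j$ by the $\Omega$-orthogonality of the summands, so the form $\hat{Q}^\lambda_{2k}$ is the orthogonal direct sum of its restrictions to the individual blocks and its signature is the sum of the blockwise signatures.

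Next I would compute the contribution of each block to $\hat{Q}^\lambda_{2k}$. On a block $E^{v_j}_\lambda$ of height $r_j=m$ (dimension $2m$) with sign $\s_j$, the vectors annihilated by $(A-\lambda\id)^{2k}$ within that block form $\Ker\bigl((A-\lambda\id)^{2k}\bigr)\cap E^{v_j}_\lambda$. For the form $\hat{Q}^\lambda_{2k}(v,w)=\lambda\,\Omega\bigl((A-\lambda\id)^k v,(A-\lambda\id)^{k-1}w\bigr)$ to be nonzero on a block, one needs $(A-\lambda\id)^k$ and $(A-\lambda\id)^{k-1}$ to both act nontrivially, which by equation~\eqref{eq:zero} forces the block height to be exactly $m=k$. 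For such a block, the normalization $\s=\lambda^{-1}\Omega\bigl((A-\lambda\id)^k v,(A-\lambda\id)^{k-1}v\bigr)$ recorded in Section~4.1 shows that $\hat{Q}^\lambda_{2k}$ restricted to the line spanned by $[v_j]$ equals precisely $\s_j=\pm1$; blocks of height $m\ne k$ contribute zero by~\eqref{eq:zero} and~\eqref{eq:p}, as do the even-height ($\s_j=0$) blocks whose structure makes the relevant pairing vanish.

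The main step requiring care is verifying that $\hat{Q}^\lambda_{2k}$ descends to a \emph{nondegenerate} form on the appropriate quotient and that its rank equals the number of height-$k$ blocks, so that each such block contributes exactly one eigenvalue of sign $\s_j$ and no spurious kernel directions inflate or deflate the count. Here I would use Lemma~\ref{lem:tech1} and equation~\eqref{eq:p} to identify the radical of $\hat{Q}^\lambda_{2k}$ with the span of all blocks of height $\ne k$ together with the lower generalized-eigenvector directions inside the height-$k$ blocks; the quotient by this radical is then one-dimensional per height-$k$ block, carrying the sign $\s_j$. I expect this degeneracy bookkeeping—pinning down exactly which vectors in $\Ker\bigl((A-\lambda\id)^{2k}\bigr)$ pair nontrivially—to be the principal obstacle, since it mixes contributions from blocks of several heights.

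Once the blockwise signature computation is complete, the first assertion follows immediately: the number of positive (resp.\ negative) eigenvalues of $\hat{Q}^\lambda_{2k}$ equals the number of height-$k$ blocks with $\s_j=+1$ (resp.\ $\s_j=-1$). The summation formula
\begin{equation*}
	\sum_j \s_j=\sum_{k=1}^{\dim V}\operatorname{Signature}(\hat{Q}^\lambda_{2k})
\end{equation*}
then follows by summing over all $k$, since every block with $\s_j\ne 0$ has a well-defined height $r_j=k$ and contributes $\s_j$ to exactly one term $\operatorname{Signature}(\hat{Q}^\lambda_{2k})$ on the right, while the $\s_j=0$ blocks contribute nothing to either side.
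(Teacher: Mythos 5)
Your proposal follows essentially the same route as the paper: decompose $V_{[\lambda]}$ into the $\Omega$-orthogonal blocks of the normal form, observe that $\hat{Q}^\lambda_{2k}$ is block-diagonal with respect to this decomposition, and use equations \eqref{eq:zero} and \eqref{eq:p} together with the normalization $\s=\lambda^{-1}\Omega\bigl((A-\lambda\Id)^{k}v,(A-\lambda\Id)^{k-1}v\bigr)$ to see that each block of dimension $2k$ contributes exactly one eigenvalue of sign $\s_j$ while every other block lies in the radical. The ``degeneracy bookkeeping'' you single out as the principal obstacle is precisely the short computation the paper carries out (the kernel constraint forces the relevant exponents to sum past $p$ except when $r=r'=0$ and $p=2k-1$), so your argument is correct and matches the published proof.
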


\begin{proof}
 On the intersection of  $ \Ker\left( (A-\lambda\id)^{2k}\right)$
with one of the symplectically orthogonal   subspaces  $E_\lambda^v$ 
constructed above for an odd  $p\ne 2k-1$, the form $\hat{Q}^\lambda_{2k}$ vanishes identically.
On the intersection of  $ \Ker\left( (A-\lambda\id)^{2k}\right)$ with a subspace $E_\lambda^v$ for a $v$ so that  $p=2k-1$ and $ \Omega \bigl((A-\lambda\Id)^{k}v,(A-\lambda\Id)^{k-1}v \bigr) =\lambda \s$ the only non vanishing component is  $\hat{Q}^\lambda_{2k}(v,v)=\s$. \\
 Indeed, $\Ker\left( (A-\lambda\id)^{2k}\right) \cap E_\lambda^v$ is spanned by $$\{ (A-\lambda\id)^{r} v\, ;\, r\ge 0 \textrm{ and } r+2k>p\, \},$$ and
$\Omega\bigl((A-\lambda\id)^{k+r}v,(A-\lambda\id)^{k-1+r'}v\bigr)=0$ when $2k+r+r'-1>p$  so the only non vanishing cases
arise when $r=r'=0$ and $p=2k-1$.\\
Similarly, the $2$ form  $\hat{Q}^\lambda_{2k}$ vanishes  on the intersection of $ \Ker\left( (A-\lambda\id)^{2k}\right)$ with   a subspace $E_\lambda^v\oplus E_\lambda^w$ constructed  above for an even $p$.
\end{proof}
The numbers $s_j$ appearing in the decomposition of $A$ are thus invariant of the matrix.
\begin{cor}\label{cor:un1}
The normal decomposition described in theorem \ref{normalforms1} is  determined by the eigenvalue $\lambda$, by
the dimension 
$\dim\bigl( \Ker (A-\lambda\Id)^r\bigr)$ for each $r\ge 1$, and by the rank and the signature of the
symmetric bilinear $2$-forms  $\hat{Q}^\lambda_{2k}$ for each $k\ge1$.  It is  unique up to a permutation of the blocks.\hfill{$\square$}
\end{cor}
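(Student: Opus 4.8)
The plan is to read off the full block data $\{(r_j,\s_j)\}$ of any normal decomposition of $A_{\vert V_{[\lambda]}}$ directly from the two families of invariants in the statement, and then to observe that two such decompositions of the same $A$ must share this data. The starting point is that both the integers $\dim\bigl(\Ker(A-\lambda\Id)^r\bigr)$ and the symmetric forms $\hat{Q}^\lambda_{2k}$ are defined purely in terms of $A$ and $\Omega$; in particular their values, ranks and signatures do not depend on any choice of symplectic basis or of decomposition into blocks. Thus it suffices to show that this intrinsic data determines the multiset of pairs $(r_j,\s_j)$.

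First I would recover the multiset $\{r_j\}$ together with the positions of the blocks carrying $\s_j=0$. Each block of Theorem \ref{normalforms1} is, as an endomorphism, a known sum of Jordan blocks for $\lambda$: a block with $\s_j\neq 0$ is built from a single Jordan chain of length $2r_j$ (the subspace $E^v_\lambda$ with $p=2r_j-1$ odd), so it contributes one Jordan block of the even size $2r_j$; a block with $\s_j=0$ is built from $E^v_\lambda\oplus E^w_\lambda$ with $p=r_j-1$ even (hence $r_j$ odd), so it contributes two Jordan blocks of the odd size $r_j$. Consequently, writing $d_s:=\dim\Ker(A-\lambda\Id)^s$, one has $d_s=\sum_{\s_j\neq 0}\min(s,2r_j)+\sum_{\s_j=0}2\min(s,r_j)$, and the usual inversion $2d_s-d_{s-1}-d_{s+1}$ returns the entire multiset of Jordan block sizes. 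Since the $\s_j\neq 0$ blocks account for exactly the even-sized Jordan blocks and the $\s_j=0$ blocks for exactly the odd-sized ones (which therefore occur in pairs), this multiset fixes, for each $k$, the number $u_{2k}$ of normal blocks with $r_j=k,\ \s_j\neq 0$ and the number $t_m$ of normal blocks with $r_j=m,\ \s_j=0$, namely half the number of odd Jordan blocks of size $m$.

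Next I would pin down the signs. By Proposition \ref{sumd}, the number of $\s_j=+1$ (resp.\ $-1$) among the blocks with $r_j=k$ equals the number of positive (resp.\ negative) eigenvalues of $\hat{Q}^\lambda_{2k}$; hence the signature and the rank of $\hat{Q}^\lambda_{2k}$ determine these two counts, and their sum is forced to equal the $u_{2k}$ found above. Combining the two steps, the whole multiset $\{(r_j,\s_j)\}$ is prescribed by the eigenvalue $\lambda$, the dimensions $d_r$, and the ranks and signatures of the $\hat{Q}^\lambda_{2k}$.

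Uniqueness is then immediate: if $A_{\vert V_{[\lambda]}}$ is written in two ways as a symplectic direct sum of blocks as in Theorem \ref{normalforms1}, both block collections are computed from the same intrinsic quantities $d_r$ and $\hat{Q}^\lambda_{2k}$, so they coincide as multisets, i.e.\ the two decompositions differ only by a permutation of the blocks. The step I expect to be the main obstacle is the bookkeeping in the second paragraph: one must verify carefully that the two block types contribute Jordan blocks of \emph{opposite parity} in their sizes, so that the kernel dimensions alone separate the $\s_j=0$ blocks from the $\s_j\neq 0$ blocks, before the signatures of the $\hat{Q}^\lambda_{2k}$ are used to split the latter by sign.
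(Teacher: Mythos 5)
Your proposal is correct and follows essentially the route the paper intends: the paper states Corollary \ref{cor:un1} as an immediate consequence of Proposition \ref{sumd} (which makes the signs $\s_j$ invariants) together with the standard fact that the kernel dimensions $\dim\Ker(A-\lambda\Id)^r$ determine the Jordan block sizes, and your parity bookkeeping (even-sized Jordan blocks $\leftrightarrow$ blocks with $\s_j\neq 0$, paired odd-sized Jordan blocks $\leftrightarrow$ blocks with $\s_j=0$) is exactly the detail the paper leaves implicit. Nothing is missing.
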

%%%%%%%%%%
\section{Normal forms  for  $A_{\vert V_{[\lambda]}}$  when  $\lambda=e^{i\phi} \in S^1\setminus \{ \pm 1\}.$}

We denote again by $p$  the largest integer such that $(A-\lambda\id)^{p}$ does not vanish identically
on  $E_\lambda$ and
we consider 
the non degenerate sesquilinear  form 
$$\widehat{Q} :\raisebox{.2ex}{$E_\lambda$}/\raisebox{-.2ex}{$\Ker(A-\lambda\Id)^p$}\times \raisebox{.2ex}{$E_{{\lambda}}$}/\raisebox{-.2ex}{$\Ker(A-\lambda\Id)^p$}\rightarrow \C$$
\begin{equation*}
	\widehat{Q} \bigl([v],[w]\bigr)=\overline{{\lambda}^p}\Omega \bigl( (A-\lambda\Id)^p v,\overline{w} \bigr).
\end{equation*}
Since $\widehat{Q}$ is non degenerate, we can choose $v\in E_\lambda$ such that $\widehat{Q}([v],[v])\ne 0$ thus $(A-\lambda\Id)^p v \ne 0$ and we consider the smallest $A$-invariant subspace, stable by complex conjugaison,
and containing $v$ :
$E^v_\lambda\oplus E^{\overline{v}}_{\overline{\lambda}} \subset E_\lambda\oplus E_{\overline{\lambda}}$.
A basis is given by
\begin{equation*}
	\bigl\{ a_{i}:=(A-\lambda\Id)^{i}v, b_{j}:=(A-\overline{\lambda}\Id)^{j}\overline{v} \quad 0\le i,j\le p \bigr\}.
\end{equation*}
We have $a_{i}=\overline{b_{i}}$ and
\begin{itemize}
	\item $\Omega(a_{i},a_{j})=0,~\Omega(b_{i}, b_{j})=0$ because $\Omega(E_\lambda,E_\lambda)=0;$
	\item $\Omega(a_{i},b_{k})=0$ if $i+k\ge p+1$ by equation (\ref{eq:zero});
	\item $\Omega(a_{i},b_{k})\ne 0$ if $p=i+k$ by equation (\ref{eq:p}) and by the choice of $v.$
\end{itemize}
\noindent We conclude that $E^v_\lambda\oplus E^{\overline{v}}_{\overline{\lambda}}$ is a symplectic subspace.\\

\subsection{If $ p=2k-1$ is odd}
%%%%%%%

observe  that $T_{k,k-1}(v):=\frac{1}{\lambda}\Omega \bigl((A-\lambda\Id)^{k}v,(A-{\overline{\lambda}}\Id)^{k-1}\overline{v} \bigr) = \s$ is real and can be put to $\pm 1$ by rescaling the vector (we could even put it to $1$ exchanging if needed $\lambda$ and its conjugate).
One may further assume, by lemma \ref{lem:Todd} that 
$$
T_{i,j}(v)=\frac{1}{\lambda^i}\frac{1}{{\overline{\lambda}}^j}\Omega \bigl((A-{{\lambda}}\Id)^{i}v,(A-{\overline{\lambda}}\Id)^{j}\overline{v} \bigr) = 0\quad \textrm{for all }~0\le i,j\le k-1.
$$
We consider the basis 
$\{a_{2k-1},\ldots,a_{k},b_p,\ldots,b_{k}, b_{0},\ldots b_{k-1}, a_{0},\ldots a_{k-1}\}$ for
such a vector $v$ with  $T_{k,k-1}(v)=\s=\pm1$ and $T_{i,j}(v)=0$ for all $0\le i,j\le k-1$;
the matrix representing $\Omega$ has the form
\begin{equation*}
	\makeatletter
		\setbox\strutbox\hbox{%
		\vrule\@height.5\baselineskip
		\@depth.2\baselineskip
		\@width\z@}
	\makeatother
	\left( \begin{smallmatrix}
		0&\vline&
		\begin{smallmatrix}
			\begin{smallmatrix}
				\overline{\ast}&&0\\
				&\ddots&\\
				\ast&&\overline{\ast}\strut
			\end{smallmatrix}
			&\vline &0         \\
			\hline
			0&\, \vline\,&
			\begin{smallmatrix}
				\overline{\ast}&&0\strut\\
				&\ddots&\\
				\ast&&\overline{\ast}\strut
			\end{smallmatrix} 
		\end{smallmatrix}\\
		\hline
		\begin{smallmatrix}
			\begin{smallmatrix}
				\overline{\ast}&&\ast\strut\\
				&\ddots&\\
				0&&\overline{\ast}\strut
			\end{smallmatrix}
			&\vline&0         \\
			\hline
			0& \vline&
			\begin{smallmatrix}
				\overline{\ast}&&\ast\strut\\
				&\ddots&\\
				0&&\overline{\ast}
			\end{smallmatrix} 
		\end{smallmatrix}&\,\vline\,&0\\
	\end{smallmatrix}\right)
\end{equation*}
 and we transform it by a Gram-Schmidt method into a symplectic basis composed of pairs of conjugate vectors, extending
$\{b_0,\ldots,b_{k-1}, a_0,\ldots,a_{k-1}\}$ on which $\Omega$ identically vanishes. 
We define
\begin{eqnarray*}
	a'_{2k-1} &=&\frac{1}{\Omega(a_{2k-1},b_0)} a_{2k-1},\\
	b'_{2k-1} &=&\frac{1}{\Omega(b_{2k-1},a_0)} b_{2k-1}=\overline{a'_{2k-1} }
\end{eqnarray*}
and,  inductively on increasing $j$ with $1<j\le k$
\begin{eqnarray*}
	a'_{2k-j} &=&\frac{1}{\Omega(a_{2k-j},b_{j-1})}\left(a_{2k-j}-\sum_{r=1}^{j-1}\Omega(a_{2k-j},b_{r-1})\, a'_{2k-r}\right),\\
	b'_{2k-j} &=&\overline{a'_{2k-j} }.
\end{eqnarray*}
Any $a'_{2k-j}$ is a linear combination of the $a_{2k-i}$ for $1\le i\le j$; reciprocally any
$a_{2k-j}$ can be written as a linear combination of the $a'_{2k-i}$ for $1\le i\le j$, and   the coefficient of $a'_{2k-j}$ is
equal to $\Omega(a_{2k-j},b_{j-1})$.\\
The basis
$\{a'_{2k-1},\ldots,a'_{k},b'_{2k-1},\ldots,b'_{k}, b_0,\ldots,b_{k-1}, a_0,\ldots,a_{k-1}\}$ is symplectic, and in that basis,
since $A(a_r)={\lambda} a_r+a_{r+1}$ and 
$A(b_r)=\overline{\lambda} b_r+b_{r+1}$ for all $r<2k-2$, 
the matrix representing $A$ is of the block upper triangular form
\begin{equation*}
	\left(\begin{array}{cccc}
		\ast & 0 & 0 & C\\
		 & \ast & \overline{C} & 0\\
		 &  &J(\overline{\lambda},k)^\tau& 0\\
		{\text{\large {0}}}&  &  &J({\lambda},k)^\tau
	\end{array}\right)
\end{equation*}
where $C$ is a $k\times k$ matrix such that the only non vanishing terms are on the last column ($C^i_{\,j}=0$ when $j<k$)
and $C^k_k=\Omega(a_k,b_{k-1})=\s{\lambda}$. The fact that the matrix is symplectic implies
that $S:=J(\overline{\lambda},k)C$ is hermitean; since $S^i_{\,j}=0$ when $j\neq k$, we have, \[
	C=J(\overline{\lambda},k )^{-1} \left(\begin{smallmatrix}0&\ldots&0&0\\
	                                                 \vdots&\ddots&\vdots&\vdots\\
	                                                 0&\ldots&0&0\\
						   0&\ldots& 0&\s
						\end{smallmatrix}\right)                     = C(k,\s,\overline{\lambda})
\]
and the matrix of the restriction of $A$ to the subspace $E^v_\lambda\oplus E^{\overline{v}}_{\overline{\lambda}}$ has the 
block triangular normal form
\begin{equation}
\left(\begin{array}{cccc}
		 J(\overline{\lambda},k )^{-1} & 0 & 0 & C(k,\s,\overline{\lambda})\\
		 &  J({\lambda},k )^{-1} & C(k,\s,{\lambda}) & 0\\
		 &  & J(\overline{\lambda},k )^{\tau} & 0\\
		{\text{\large{0}}} &  &  & J({\lambda},k )^\tau	\end{array}\right).
\end{equation}
Writing $a'_{2k-j}=\frac{1}{\sqrt 2}(e_{2j-1}-ie_{2j}),~b'_{2k-j}=\overline{a'_{2k-j}}=\frac{1}{\sqrt 2}(e_{2j-1}+ie_{2j})$, as well as 
$a_{j-1}=\frac{1}{\sqrt 2}(f_{2j-1}-i f_{2j})$ and $b_{j-1}=\overline{a_{j-1}}=\frac{1}{\sqrt 2}(f_{2j-1}+i f_{2j})$ for $1\le j\le k$,
the vectors $e_i,f_j$ all belong to the real subspace denoted $V^v_{[ \lambda]}$ of $V$ whose complexification is $E^v_\lambda\oplus  E^{\overline{v}}_{\overline{\lambda}}$ and we get a symplectic basis $$\{e_1,\ldots,e_{2k},f_1,\ldots,f_{2k} \}$$ of this real subspace $V^v_{[ \lambda]}$.  The matrix representing $A$ in this basis  is :  
\begin{equation}
	\left(\begin{array}{cc}
		\bigl(J_\R(\overline{\lambda},2k)\bigr)^{-1} & C_\R(k,\s,\overline{\lambda})\\
		0 & \bigl(J_\R(\overline{\lambda},2k)\bigr)^\tau
	\end{array}\right)
\end{equation}
where $J_\R(e^{i\phi},2k)$ is defined as in \eqref{eq:JR}
and where $C_\R(k,\s,e^{i\phi})$ is the $(p+1)\times (p+1)$ matrix written in terms of two by two matrices  as
\begin{equation}\label{eq:CR}
C_\R(k,\s,e^{i\phi})^\tau=\s\left(\begin{smallmatrix}
		0&\ldots &0&0\\
		\vdots&&\vdots&\vdots\\
		0&\ldots&0 &0\\
		(-1)^{k-1} R(e^{ik\phi})&\ldots &- R(e^{i2\phi})&R(e^{i\phi})
	\end{smallmatrix}\right)
\end{equation}
 with $R(e^{i\phi})=\left(\begin{array}{cc}
		\cos \phi&-\sin \phi\\
		\sin \phi&\cos \phi
	\end{array}\right)$ as before and $\s=\pm 1$.
This is the normal form of $A$ restricted to ${V^v_{[ \lambda]}}$; recall that
$$
\s={\lambda}^{-1}\, \Omega \bigl((A-\lambda\Id)^{k}v,(A-{\overline{\lambda}}\Id)^{k-1}\overline{v} \bigr).
$$
%%%%%%%%

\subsection {If $p=2k$ is even}

we observe that $\Omega \bigl( (A-\overline{\lambda}\Id)^{k}\overline{v},(A-\lambda\Id)^{k}v \bigr)$ is purely imaginary and  we choose $v$ so that it is  $\Omega \bigl( (A-\overline{\lambda}\Id)^{k}\overline{v},(A-\lambda\Id)^{k}v \bigr) =\s i $  where $\s=\pm 1$ (remark that the sign changes if one  permutes  $\lambda$ and $\overline{\lambda}$).
We can further choose the vector  $v$ so that  :
\begin{eqnarray}
\Omega \left((A-\lambda\Id)^kv,(A-\overline{\lambda}\Id)^{k-1}\overline{v}\right)&=&\half \lambda \s i \label{condv}\\ 
T_{i,j}(v):=\frac{1}{\lambda^i {\overline{\lambda}}^j}\Omega \left( (A-\lambda\Id)^iv,(A-{\overline{\lambda}}\Id)^{j}{\overline{v}}\right) &=& 0   \qquad \quad  \textrm{ for all} 0\le i,j\le k-1 \nonumber; 
\end{eqnarray}
Indeed, as before, by 	 \eqref{eq:Aij},  we have $T_{i,j}(v)=- T_{i+1,j}(v)-T_{i+1,j-1}(v)$ and  $T_{i,j}(v)=-{\overline{T_{j,i}(v)}}$ and we proceed   as in lemma \ref{lem:Todd} by decreasing induction on $i+j$:
\begin{itemize}
\item if $T_{{k},{k-1}}(v)= \alpha_1$, since $T_{k-1,{k}}(v)=\s i -T_{{k},{k-1}}(v)$ the imaginary part of $\alpha_1$ is equal to $\half \s i$ and  we replace $v$ by $v-\frac{\alpha_1}{2\lambda \s i}(A-\lambda\Id)v$; it generates the same $A$-invariant subspace
and the quantities $T_{i,j}(v)$ do not vary for $i+j\ge 2k$ but now $T_{{k},{k-1}}(v)= \alpha_1
-\frac{\alpha_1}{2\s i}T_{k+1,k-1}(v)+\frac{\overline{\alpha_1}}{2\s i}T_{k,k}(v)=\alpha_1-\half \alpha_1-\half{\overline{\alpha_1}}=\half \s i$ since $ T_{k,k}(v)=-T_{k+1,k-1}(v)=-\s i$; so we can now assume $T_{{k},{k-1}}(v)=\half  \s i$;
\item if $T_{{k-1},{k-1}}(v)= \alpha_2$, this $\alpha_2$ is purely imaginary and we replace $v$ by $v-\frac{\alpha_2}{2\lambda^2\s i}(A-\lambda\Id)^{2}v$; it generates the same $A$-invariant subspace
and the quantities $T_{i,j}(v)$ do not vary for $i+j\ge 2k-1$;
now $T_{{k-1},{k-1}}(v)=\alpha_2-\frac{\alpha_2}{2\s i}T_{{k+1},{k-1}}(v)+\frac{\overline{\alpha_2}}{2\s i}T_{{k-1},{k+1}}(v)=\alpha_2-\half \alpha_2+\half {\overline{\alpha_2}}= 0$. We may thus assume this property to hold for $v$.
\item if $T_{{k-2},{k-1}}(v)= \alpha_3=-T_{{k-1},{k-1}}(v)-T_{{k-1},{k-2}}(v)={\overline{T_{{k-2},{k-1}}(v)}}$, this $\alpha_3$ is real and we replace $v$ by $v-\frac{\alpha_3}{2\lambda^3\s i}(A-\lambda\Id)^{3}v$; it generates and the the same $A$-invariant subspace and the quantities $T_{i,j}(v)$ do not vary for $i+j\ge 2k-2$;
now $T_{{k-2},{k-1}}(v)=\alpha_3-\frac{\alpha_3}{2\s i}T_{{k+1},{k-1}}(v)+\frac{\overline{\alpha_3}}{2\s i}T_{{k-2},{k+2}}(v)=0$, since
$T_{{k+1},{k-1}}(v)=-T_{{k},{k}}(v)=-T_{{k-2},{k+2}}(v)=\s i$; hence also 
$T_{{k-1},{k-2}}(v)= 0$; 
\item we now assume by induction to have a $J>1$ so  that $T_{i,j}(v)=0$ for all $0\le i,j\le k-1$ so that $i+j> 2k-1-J$;
\item if $T_{{k-J},{k-1}}(v)= \alpha_{J+1}$, then $T_{{k-J},{k-1}}(v)=(-1)^{J-1}T_{{k-1},{k-J}}(v)$
so that $\alpha_{J+1}$ is real when $J$ is even and is imaginary when $J$ is odd; we replace $v$ by $v-\frac{\alpha_{J+1}}{2\lambda ^{J+1}\s i}(A-\lambda\Id)^{J+1}v$; it sgenerates  the same $A$-invariant subspace
and the quantities $T_{i,j}(v)$ do not vary for $i+j\ge 2k-J$, but
 now $T_{{k-J},{k-1}}(v)= \alpha_{J+1} -\frac{\alpha_{J+1}}{2\s i}T_{{k+1},{k-1}}(v)+\frac{\overline{\alpha_{J+1}}}{2\s i}T_{{k-J},{k+J}}(v)=\alpha_{J+1}-\frac{\alpha_{J+1}}{2}+(-1)^{J+1}\frac{\overline{\alpha_{J+1}}}{2}=0$.\\
 Hence also 
$T_{{k-J+1},{k-2}}(v)= 0,\ldots ~T_{{k-1},{k-J+1}}(v)= 0$; so the induction step is proven.
\end{itemize}
{\small{\begin{remark}
For such a  $v$, all $T_{i,j}(v)$ are determined inductively and we have
\begin{eqnarray*}
T_{i,j}(v)&=&0 ~~~~~\textrm{if~} i+j\ge 2k+1\quad \textrm{and}\quad  \textrm{ for all } 0\le i,j \le k-1\\
T_{k-r,k+r}(v)&=&(-1)^{r+1} \s i ~~~~~ \textrm{ for all  }~ 0\le r\le k\\
T_{k-r,k+m}(v)&=&(-1)^{r+1}\frac{\s i}{2}\frac{(r+m)(r-1)!}{m!(r-m)! }~~~ \textrm{ for all  }  0\le m\le  r\le k,\, r>1\\
T_{i,j}(v)&=&T_{j,i}(v)~~~~ \textrm{ for all  }  i,j.
\end{eqnarray*}
\end{remark}}}
With the notation $a_i=(A-\lambda\Id)^{i}v, b_i=(A-\overline{\lambda}\Id)^{i}\overline{v}$, we consider the basis 
$$\{a_{2k},\ldots, a_{k+1}, b_{2k},   \ldots,   
b_{k+1},b_k;
b_0,\ldots, b_{k-1}, a_0, \ldots ,  a_{k-1},  a_k\}
 $$ 
for such a vector $v$;
the matrix representing $\Omega$ in this basis has the form

\begin{equation*}\makeatletter
       \setbox\strutbox\hbox{%
         \vrule\@height.5\baselineskip
               \@depth.2\baselineskip
               \@width\z@}\makeatother
\left(
   \begin{smallmatrix} 0     & \vline & 0 & \vline & 0   & \vline & 
     \begin{smallmatrix}
       \overline{\ast}\strut       &        & 0                             \\
                             & \ddots &                               \\
       \ast \strut                 &        & \overline{\ast}\strut%        \\[0,05cm]
     \end{smallmatrix}
                             & \vline & 0 & \vline & 0                \\ %end of first line
     \hline 0                & \vline & 0 & \vline & 0   & \vline & 0 & \vline & 
     \begin{smallmatrix}
       \overline{\ast}\strut &        & 0                             \\
                             & \ddots &                               \\
       \ast \strut                 &        & \overline{\ast}\strut
     \end{smallmatrix}
                             & \vline & 0                             \\ %end of second line
     \hline 0                & \vline & 0 & \vline & 0   & \vline & 0 & \vline & 
     \begin{smallmatrix}
       \ast \strut                 & \ldots & \ast\strut
     \end{smallmatrix}
                             & \vline & \s i                            \\ %end of third line
     \hline
     \begin{smallmatrix}
       \overline{\ast}\strut       &        & \ast \strut                         \\
                             & \ddots &                               \\
       0                     &        & \overline{\ast}\strut
     \end{smallmatrix}
                             & \vline & 0 & \vline & 0   & \vline & 0 & \vline & 0 & \vline & 
     \begin{smallmatrix}
       \ast \strut                                                          \\
       \vdots                                                         \\
       \ast\strut
     \end{smallmatrix}                                                \\ %end of fourth line
     \hline 0                & \vline & 
     \begin{smallmatrix}
       \overline{\ast}\strut       &        & \ast \strut                         \\
                             & \ddots &                               \\
       0                     &        & \overline{\ast}\strut
     \end{smallmatrix}
                             & \vline & 
     \begin{smallmatrix}
       \ast \strut                                                          \\
       \vdots                                                         \\
       \ast\strut
     \end{smallmatrix}
                             & \vline & 0 & \vline & 0   & \vline & 0 \\ %end of fifth line
     \hline 0                & \vline & 0 & \vline & -\s i\strut & \vline & 
     \begin{smallmatrix}
       \ast \strut                 & \cdots & \ast\strut
     \end{smallmatrix}
                             & \vline & 0 & \vline & 0                \\
   \end{smallmatrix}
\right).
\end{equation*}

 We transform (by a Gram-Schmidt method) the basis  above into a symplectic basis,  composed of pairs of conjugate vectors
 (up to a factor) and 
 extending 
$$
b_0,\ldots, b_{k-1}, a_0,   \ldots, a_{k-1}
$$
 on which $\Omega$ identically vanishes.  We define inductively, for increasing $j$ with $1\le j\le k-1$
 \begin{eqnarray*}
a'_{2k}:&=&\frac{1}{\Omega\bigl((A-{\lambda}\id)^{2k}v,\overline{v}\bigr)} (A-\lambda\id)^{2k}v=\frac{1}{\Omega(a_{2k},b_{0})}a_{2k} \\
b'_{2k}:&=&\frac{1}{\Omega\bigl((A-\overline{\lambda}\id)^{2k},\overline{v},v\bigr)} (A-\overline{\lambda}\id)^{2k}\overline{v}=\frac{1}{\Omega(b_{2k},a_{0})}b_{2k}=\overline{a'_{2k}}\\
a'_{2k-j}&=&\frac{1}{\Omega(a_{2k-j},b_j)} \left( a_{2k-j}-\sum_{r=0}^{j-1} \Omega(a_{2k-j},b_r) a'_{2k-r} \right) \\
b'_{2k-j}&=&\frac{1}{\Omega(b_{2k-j},a_j)} \left( b_{2k-j}-\sum_{r=0}^{j-1} \Omega(b_{2k-j},a_r) b'_{2k-r}\right)=\overline{a'_{2k-j}} \\
a'_{k}&=& a_{k} -\sum_{r=0}^{k-1}\Omega(a_{k},b_r)a'_{2k-r}  \\
b'_{k}&=&\frac{1}{\Omega(b_k,a_k)} \left( b_{k}-\sum_{r=0}^{k-1}\Omega(b_{k},a_r)b'_{2k-r} \right)=\frac{1}{i\s}\overline{a'_k}.\\
\end{eqnarray*}
Each  $a'_{2k-j}$ is a linear combination of the $(A-\lambda\id)^{2k-r}v$ for $0\le r\le j$. 
The basis 
$$
\{ a'_{2k},\ldots, a'_{k+1},b'_{2k},\ldots,b'_{k+1},b'_k; b_0,\ldots,b_{k-1},a_0,\ldots, a_{k-1},a'_k \}
$$
is now symplectic.
Since  $A(a_r)={\lambda} a_r+a_{r+1}$ for all $r<2k$, and $A (a_{2k})=\lambda a_{2k}$,
the matrix representing $A$  in that basis is of the form
\begin{equation*}
	\left(
		\begin{smallmatrix}
			\begin{smallmatrix}
				~&A_1&~
			\end{smallmatrix}
				& 0 & 0 &
				\left(
					\begin{smallmatrix}
						& {0}&
							{\begin{smallmatrix}
								c^{2k}& d^{2k}\\     
								\vdots&\vdots \\
								c^{k+1}&d^{k+1}\\
							\end{smallmatrix}} \\
					\end{smallmatrix}
				\right)\\[2mm]
			0 &
				\begin{smallmatrix}
					~&A_2&~
				\end{smallmatrix}
				&
				\left(
					\begin{smallmatrix}
						&{{0}}&
							{{\begin{smallmatrix}
								e^{2k}\\             
								\vdots \\
								e^{k+1} \\
								e^{k}
							\end{smallmatrix}}}\\
					\end{smallmatrix}
				\right)
				& 0\\[3mm]
			0 & 0 & J(\overline{\lambda},k)^\tau &0\\[3mm]
			0 & 0 & 0 &J({\lambda},k+1)^\tau\\[3mm]
		\end{smallmatrix}
	\right)
\end{equation*}
with $A(b_{k-1})=\overline\lambda b_{k-1} +\sum_{j=0}^k e^{k+j} b'_{k+j}$, 
$A(a_{k-1})=\lambda a_{k-1} +a'_k+ \sum_{j=1}^k c^{k+j} a'_{k+j}$ and 
$A(a'_{k})=\lambda a'_{k} +\sum_{j=1}^k d^{k+j} a'_{k+j}$.\\

Since a matrix $\left(\begin{array}{cc} A' & E\\ 0& D\end{array}\right)$ is symplectic  if and only if  $A'=(D^\tau)^{-1}$
and $D^\tau\, E$ is symmetric, we have
$$
A_1= J(\overline{\lambda},k)^{-1}    \qquad\qquad A_2=J({\lambda},k+1)^{-1}
$$
and 
 $$
J(\overline\lambda,k)\left( \begin{smallmatrix}& {{0}}& {{\begin{smallmatrix}c^{2k}& d^{2k}\\     
		                                                                                                                                         \vdots&\vdots \\ c^{k+1}&d^{k+1}\\  \end{smallmatrix}}} \\
		                      \end{smallmatrix}\right)
	=\left( J({\lambda},k+1)\left( \begin{smallmatrix}&{{0}}& {{\begin{smallmatrix}e^{2k}\\             
		                                                                                                                 \vdots \\ e^{k+1} \\e^{k}  \end{smallmatrix}}}\\ \end{smallmatrix}\right)\right)^\tau.
$$ This implies 
$$
J(\overline{\lambda},k)\left( {{\begin{smallmatrix}c^{2k}& d^{2k}\\  \vdots&\vdots \\ c^{k+2}&d^{k+2}\\c^{k+1}&d^{k+1}\\  \end{smallmatrix}}}\right)=\left( {{\begin{smallmatrix}0& 0\\  \vdots&\vdots \\0&0\\ s_1&s_2\\  \end{smallmatrix}}}\right)\qquad\qquad 
J({\lambda},k+1)\left({{\begin{smallmatrix}e^{2k}\\  \vdots \\e^{k+2}\\ e^{k+1} \\e^{k}  \end{smallmatrix}}}\right)=\left(  {{\begin{smallmatrix}0\\  \vdots \\0\\ s_1 \\ s_2  \end{smallmatrix}}}\right)
$$ 
so that $s_1=\overline\lambda c^{k+1}$ and $s_2=\overline\lambda d^{k+1}$. Now 
\begin{eqnarray*}
A(a'_{k})&=&A\Bigl(a_k+\sum_{j\ge 1}F^j_ka_{k+j}\Bigr)=\lambda a'_{k}+a_{k+1}+\sum_{j\ge 1}F^j_ka_{k+j+1}\\
&=&\lambda a'_{k}+a'_{k+1}\Omega(a_{k+1},b_{k-1})+\sum_{j\ge 1}F^{'j}_ka'_{k+j+1}
\end{eqnarray*}
so that $d^{k+1}=\Omega(a_{k+1},b_{k-1})=\lambda^2 i\s$ and $s_2=\lambda i\s$. We also have
$$A(a_{k-1})=\lambda a_{k-1}+a_k=\lambda a_{k-1}+a'_k+\Omega(a_k,b_{k-1})a'_{k+1}+\sum_{j\ge2} G^j a'_{k+j}$$
so that $c^{k+1}=\Omega(a_k,b_{k-1})=\lambda\half i\s$ and $s_1=\half i\s $.

We have thus shown that the matrix representing $A$ in the chosen basis has the block upper-triangular normal form 
\begin{equation}\label{normalformcircleodd}
\begin{pmatrix}
J(\overline\lambda,k)^{-1}&0&0&J(\overline\lambda,k)^{-1} S\\ 
&J(\lambda,k+1)^{-1}&J(\lambda,k+1)^{-1} S^\tau&0\\ 
&& J(\overline\lambda,k)^{\tau}&0\\ 
  \multicolumn{2}{c}{\text{\kern-0.7em\smash{\raisebox{0.75ex}{\large 0}}}} && J(\lambda,k+1)^{\tau}\\ \end{pmatrix} 
\end{equation}
where $S$ is the $k\times(k+1)$ matrix defined by
\begin{equation}\label{normalformcircleodd2}
S= S(k,d,\lambda):=\left(\begin{array}{ccccc} 
0&\ldots& 0&0&0\\
\vdots &&\vdots&\vdots&\vdots \\
0&\ldots& 0&0&0\\
0&\ldots &0&\half i\s&\lambda i\s\\
 \end{array}\right) .
\end{equation}

We write $a'_{2k+1-j}=\frac{1}{\sqrt 2}(e_{2j-1}-ie_{2j}),~b'_{2k+1-j}=\overline{a'_{2k+1-j}}=\frac{1}{\sqrt 2}(e_{2j-1}+ie_{2j})$, as well as 
$a_{j-1}=\frac{1}{\sqrt 2}(f_{2j-1}-i f_{2j})$ and $b_{j-1}=\overline{a_{j-1}}=\frac{1}{\sqrt 2}(f_{2j-1}+i f_{2j})$ for $1\le j\le k$,
and $a'_k=\frac{1}{\sqrt 2}(e_{2k+1}+id\, f_{2k+1}), b'_k=-id\overline{a'_{k}}=\frac{1}{\sqrt 2}(-f_{2k+1}-id\, e_{2k+1})$.
The vectors $e_i,f_j$ all belong to the real subspace $V^v_{[ \lambda]}$ of $V$ whose complexification is $E^v_\lambda\oplus  E^{\overline{v}}_{\overline{\lambda}}$ and we get a symplectic basis $$\{e_1,\ldots,e_{2k+1},f_1,\ldots,f_{2k+1} \}$$ of $V^v_{[ \lambda]}$. In this basis, the matrix representing $A$ is :  

\begin{equation*}
	\makeatletter
		\setbox\strutbox\hbox{%
		\vrule\@height.5\baselineskip
		\@depth.2\baselineskip
		\@width\z@}
	\makeatother
	\left(\begin{smallmatrix}
		\bigl(J_\R(\overline{\lambda},2k)\bigr)^{-1}&\vline&\s\, U^2(\phi)&\vline&
		 %%%
			\begin{smallmatrix}
				0\strut\\
				\vdots\\ 
				0\strut
			\end{smallmatrix}
			&
			\begin{smallmatrix}
				\cdots\\
				\phantom{\vdots}\\
				\cdots
			\end{smallmatrix}&
			\begin{smallmatrix}
				0\strut\\
				\vdots\\
				0\strut
			\end{smallmatrix}
			&\frac{\s}{2} V^2(\phi) &\frac{-\s}{2}V^1(\phi)&\vline &U^1(\phi)\\
		\hline
		0\strut&\vline&\cos\phi&\vline&
			0&\ldots & 0& 1 & 0
		&\vline&\s \sin\phi\\
		\hline
		0
			&\vline&\begin{smallmatrix}
			0\strut\\
			\vdots\\
			0\strut\\
		\end{smallmatrix}
		&\vline& &&&\bigl(J_\R(\overline{\lambda},2k)\bigr)^\tau &&\vline&
		\begin{smallmatrix}
			0\strut\\
			\vdots\\
			0\\
		 \end{smallmatrix} \\
		 \hline
		0\strut&\vline&-\s\sin\phi&\vline&
		0&\ldots & 0&0  &-\s&\vline
		&\cos\phi\\
	\end{smallmatrix}\right)
\end{equation*}
where  $s=\pm 1$, $U^1(\phi), U^2(\phi),V^1(\phi)$ and $V^2(\phi)$ are real  $2k\times 1$ column matrices such that 		 
\begin{equation*}
  \left(V^1 (\phi) \, V^2(\phi) \right)= \left(\begin{matrix} (-1)^{k-1}  R(e^{i k\phi})\\
		 \vdots\\R(e^{i\phi})
		 \end{matrix}\right)
\end{equation*}
\begin{equation*}
 \left(U^1 (\phi) \, U^2(\phi) \right)= \left(\begin{matrix} (-1)^{k-1}  R(e^{i (k+1)\phi})\\
		 \vdots\\R(e^{i2\phi})\\
		 \end{matrix}\right)= \left(V^1 (\phi) \, V^2(\phi) \right)\left(R(e^{i\phi})\right).
\end{equation*}
This is the normal form of $A$ restricted to $V^v_{[ \lambda]}$. Recall that
$$
\s=i\Omega \bigl( (A-\lambda\Id)^{k}v,(A-\overline{\lambda}\Id)^{k}\overline{v} \bigr).  
$$
\begin{theorem}[Normal form for $A_{\vert V_{[\lambda]}}$ for $\lambda\in S^1\setminus \{\pm 1\}.$]\label{normalforms3}
Let $\lambda \in S^1\setminus \{\pm 1\}$ be an eigenvalue of $A$. 
There exists a symplectic basis of $V_{[\lambda]}$
in which the matrix representing the restriction of $A$ to $V_{[\lambda]}$ is a symplectic direct sum of 
 $4k_j\times 4k_j$ matrices ($k_j\ge 1$) of the form 
	\begin{equation}
		\makeatletter
			\setbox\strutbox\hbox{%
			\vrule\@height.5\baselineskip
			\@depth.2\baselineskip
			\@width\z@}
		\makeatother
		\left(\begin{smallmatrix}
			\bigl(J_\R(\overline{\lambda},2k_j)\bigr)^{-1}&\vline&
				\begin{smallmatrix} 
					\begin{smallmatrix}
						0\\
						\vdots\\
						0\strut \\
					\end{smallmatrix}
					&
					\begin{smallmatrix}
						\cdots\\
						\phantom{\vdots}\\
						\cdots\strut
					\end{smallmatrix}&
					\begin{smallmatrix}
						0\\
						\vdots\\
						0\strut \\
					\end{smallmatrix}
					&\s_j\, V_{k_j}^1(\phi) &\s_j\, V_{k_j}^2(\phi)\\
				\end{smallmatrix})\\
			\hline
			0\strut &\vline&\strut \bigl(J_\R(\overline{\lambda},2k_j)\bigr)^\tau\strut\\
		\end{smallmatrix}\right)
	\end{equation}

and    $(4k_j+2)\times (4k_j+2)$ matrices ($k_j\ge 0$) of the form
\begin{equation}\label{eqref:3-1}
	\makeatletter
		\setbox\strutbox\hbox{%
		\vrule\@height.5\baselineskip
		\@depth.2\baselineskip
		\@width\z@}
	\makeatother
	\left(\begin{smallmatrix}
		\bigl(J_\R(\overline{\lambda},2k_j)\bigr)^{-1}&\vline&\s_j\, U_{k_j}^2(\phi)&\vline&
		 %%%
			\begin{smallmatrix}
				0\strut\\
				\vdots\\ 
				0\strut
			\end{smallmatrix}
			&
			\begin{smallmatrix}
				\cdots\\
				\phantom{\vdots}\\
				\cdots
			\end{smallmatrix}&
			\begin{smallmatrix}
				0\\
				\vdots\\
				0
			\end{smallmatrix}
			&\frac{\s_j}{2} V_{k_j}^2(\phi) &\frac{-\s_j}{2}V_{k_j}^1(\phi)&\vline &U_{k_j}^1(\phi)\\
		\hline
		0\strut&\vline&\cos\phi&\vline&
			0&\ldots & 0& 1 & 0
		&\vline&\s_j\sin\phi\\
		\hline
		0
		&\vline&\begin{smallmatrix}
			0\strut\\
			\vdots\\
			0\strut\\
		\end{smallmatrix}
		&\vline& &&&\bigl(J_\R(\overline{\lambda},2k_j)\bigr)^\tau &&\vline&
		\begin{smallmatrix}
			0\strut\\
			\vdots\\
			0\\
		 \end{smallmatrix} \\
		 \hline
		0\strut&\vline&-\s_j\sin\phi&\vline&
		0&\ldots & 0&0  &-\s_j&\vline
		&\cos\phi\\
	\end{smallmatrix}\right)
\end{equation}
where $J_\R(e^{i\phi},2k)$ is defined as in \eqref{eq:JR},
where $  \left(V_{k_j}^1 (\phi) \, V_{k_j}^2(\phi) \right)$ is the $2k_j\times 2$ matrix
 defined by 
  \begin{equation}
  \left(V_{k_j}^1 (\phi) \, V_{k_j}^2(\phi) \right)=
 \left(\begin{matrix} (-1)^{k_j-1}  R(e^{i k_j\phi})\\
		 \vdots\\R(e^{i\phi})
		 \end{matrix}\right)
\end{equation}
with $R(e^{i\phi})=\left(\begin{array}{cc}
		\cos \phi&-\sin \phi\\
		\sin \phi&\cos \phi
	\end{array}\right)$, where  
	\begin{equation}\label{U^1_{k_j}U^2_{k_j}}
 \left(U^1_{k_j} (\phi) \, U^2_{k_j}(\phi) \right)= \left(V^1_{k_j} (\phi) \, V^2_{k_j}(\phi) \right)\left(R(e^{i\phi})\right)
\end{equation}
 and  where $\s_j=\pm 1$.
The complex dimension of the eigenspace of eigenvalue $\lambda$ in $V^\C$ is given by the number of such matrices. 
\end{theorem}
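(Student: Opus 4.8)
The plan is to prove existence of the stated symplectic basis by a reduction to the two explicit block computations carried out above, organized as a decreasing induction on $\dim V_{[\lambda]}$. By the $\Omega$-orthogonal decomposition $V=V_{[\lambda_1]}\oplus\cdots\oplus V_{[\lambda_K]}$ established in the Preliminaries it suffices to produce the normal form on a single $V_{[\lambda]}$ with $\lambda\in S^1\setminus\{\pm1\}$. The structural fact driving the induction is that whenever $Z\subset V_{[\lambda]}$ is an $A$-invariant symplectic subspace, its symplectic orthogonal $Z^{\perp_\Omega}$ is again $A$-invariant and symplectic, with nilpotency index $p'\le p$; this lets me split off one block at a time and terminate by dimension count.

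For the base step I would use the non-degeneracy of $\widehat{Q}$ to choose $v\in E_\lambda$ with $(A-\lambda\Id)^pv\neq0$, and form the smallest $A$-invariant, conjugation-stable subspace $E^v_\lambda\oplus E^{\overline{v}}_{\overline{\lambda}}$; the triangular shape of $\Omega$ in the chain basis shows it is symplectic. According to the parity of $p$ I would then invoke the normalizations already performed: for $p=2k-1$, Lemma \ref{lem:Todd} arranges $T_{i,j}(v)=0$ for $0\le i,j\le k-1$, while for $p=2k$ the explicit decreasing induction on $i+j$ given above achieves $T_{i,j}(v)=0$ in the required range together with $\Omega\bigl((A-\overline{\lambda}\Id)^k\overline{v},(A-\lambda\Id)^kv\bigr)=\s i$. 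In each case the Gram--Schmidt procedure yields a symplectic basis of conjugate pairs in which $A$ is represented by the complex block-triangular matrix displayed above, and the constraint \eqref{mattriang} forces the off-diagonal block to be exactly $C(k,\s,\lambda)$ in the odd case and the matrix $S(k,\s,\lambda)$ of \eqref{normalformcircleodd2} in the even case, with the sign identified as $\s=\lambda^{-1}\Omega\bigl((A-\lambda\Id)^kv,(A-\overline{\lambda}\Id)^{k-1}\overline{v}\bigr)$ respectively $\s=i\,\Omega\bigl((A-\lambda\Id)^kv,(A-\overline{\lambda}\Id)^k\overline{v}\bigr)$.

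To pass to the real form I would take real and imaginary parts of the $a'_j,b'_j$ exactly as above, checking that the resulting vectors $e_i,f_j$ span the real subspace $V^v_{[\lambda]}$ whose complexification is $E^v_\lambda\oplus E^{\overline{v}}_{\overline{\lambda}}$ and that the change of basis is symplectic; this converts the complex block into the real $4k_j\times4k_j$ or $(4k_j+2)\times(4k_j+2)$ matrix of the statement, with $V^1_{k_j},V^2_{k_j},U^1_{k_j},U^2_{k_j}$ as defined. Replacing $V_{[\lambda]}$ by the $\Omega$-orthogonal complement of $V^v_{[\lambda]}$ and iterating produces the symplectic direct sum. Since each block carries a single Jordan chain for $\lambda$, the number of blocks equals $\dim_\C\Ker(A-\lambda\Id)$, which gives the final dimension count.

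I expect the main obstacle to lie in the even case $p=2k$: there the normalization of $v$ requires the full decreasing induction on $i+j$ rather than the cleaner Lemma \ref{lem:Todd}, and one must simultaneously control the off-diagonal coefficients $c^{k+j},d^{k+j},e^{k+j}$ so that the symplectic condition pins down $S(k,\s,\lambda)$ precisely. Verifying that the real-and-imaginary-part change of basis is genuinely symplectic, and that it reproduces the stated $U$ and $V$ matrices in terms of the rotation blocks $R(e^{i\phi})$, is the most calculation-heavy point of the argument.
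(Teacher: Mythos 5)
Your proposal follows the paper's own argument essentially step for step: the same choice of $v$ via the non-degenerate form $\widehat{Q}$, the same normalizations of the quantities $T_{i,j}(v)$ (Lemma \ref{lem:Todd} in the odd case, the explicit decreasing induction on $i+j$ in the even case), the same Gram--Schmidt construction of a symplectic basis of conjugate pairs, the same passage to real and imaginary parts, and the same induction on the symplectic orthogonal complement. The one point you flag as delicate --- that in the even case the symplectic condition \eqref{mattriang} alone does not pin down $S(k,\s,\lambda)$ and one must compute the entries $c^{k+1}=\Omega(a_k,b_{k-1})$ and $d^{k+1}=\Omega(a_{k+1},b_{k-1})$ from the normalization of $v$ --- is precisely where the paper carries out the extra computation, so your plan is sound and not materially different.
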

%%%%%%%%%%%%%%
%%%%%%%%%%%%
\begin{definition}
Given  $\lambda \in S^1\setminus \{\pm 1\}$,  we define, for any integer $m\ge 1$, a Hermitian form $\hat{Q}^\lambda_{m} $ on $\Ker\left( (A-\lambda\id)^{m}\right)$ by:
\[\begin{array}{lll}
\hat{Q}^\lambda_{m} :& \Ker\bigl( (A-\lambda\id)^{m}\bigr)\times \Ker\bigl( (A-\lambda\id)^{m}\bigr)\rightarrow \C\nonumber&\\
&\quad (v,w)\mapsto\frac{1}{ \lambda} \Omega\bigl((A-\lambda\id)^{k}v,(A-\overline{\lambda}\id)^{k-1}\overline{w}\bigr)  &\textrm { if }m=2k\\
&\quad (v,w)\mapsto  i\, \Omega\bigl((A-\lambda\id)^{k}v,(A-\overline{\lambda}\id)^{k}\overline{w}\bigr)  &\textrm { if }m=2k+1.
\end {array}\]
\end{definition}

\begin{proposition}\label{sumds}
For $\lambda \in S^1\setminus \{\pm 1\}$, the number of positive (resp. negative) eigenvalues of the Hermitian $2$-form $\hat{Q}^\lambda_m$ is equal to the number
of $s_j$ equal to $+1$ (resp. $-1$) arising in blocks of dimension $2m$  in the normal decomposition of $A$ on $V_{[\lambda]}$ given in theorem \ref{normalforms3}.
\end{proposition}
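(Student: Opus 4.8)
The plan is to follow verbatim the strategy of the proof of Proposition~\ref{sumd}, replacing the symmetric form there by the Hermitian form $\hat{Q}^\lambda_m$ and the real bookkeeping by the quantities $T_{i,j}$ of \eqref{eq:defT}. First I would invoke Theorem~\ref{normalforms3} to write $V_{[\lambda]}$ as a symplectic direct sum of the $A$-invariant blocks $V^{v_j}_{[\lambda]}$, whose complexifications are the cyclic subspaces $E^{v_j}_\lambda\oplus E^{\overline{v_j}}_{\overline\lambda}$ and which are mutually $\Omega$-orthogonal. Complexifying gives $E_\lambda=\bigoplus_j E^{v_j}_\lambda$, and hence $\Ker\bigl((A-\lambda\id)^m\bigr)=\bigoplus_j\bigl(\Ker((A-\lambda\id)^m)\cap E^{v_j}_\lambda\bigr)$. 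The first key step is to check that this decomposition is orthogonal for $\hat{Q}^\lambda_m$: since $\hat{Q}^\lambda_m(v,w)$ is, up to a scalar, a value of $\Omega$ whose first argument lies in $E_\lambda$ and whose second argument (built from $\overline{w}$ and powers of $A-\overline\lambda\id$) lies in $E_{\overline\lambda}$, taking $v\in E^{v_i}_\lambda$ and $w\in E^{v_j}_\lambda$ with $i\ne j$ yields $\Omega\bigl(E^{v_i}_\lambda,E^{\overline{v_j}}_{\overline\lambda}\bigr)=0$ by the symplectic orthogonality of distinct blocks. Thus $\hat{Q}^\lambda_m$ is block-diagonal and its signature is the sum of the signatures of its restrictions to the individual blocks.

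The heart of the argument is the computation on a single block $E^{v_j}_\lambda$ with cyclic generator $v_j$ and largest power $p_j$. Writing $a_r=(A-\lambda\id)^r v_j$ and using $\overline\lambda=\lambda^{-1}$ (since $\lambda\in S^1$) together with \eqref{eq:defT}, one reduces $\hat{Q}^\lambda_{2k}(a_r,a_{r'})$ to $\lambda^{r-r'}\,T_{k+r,\,k-1+r'}(v_j)$ when $m=2k$, and likewise $\hat{Q}^\lambda_{2k+1}(a_r,a_{r'})$ to $\lambda^{r-r'}\,T_{k+r,\,k+r'}(v_j)$ when $m=2k+1$. Now $a_r$ lies in $\Ker\bigl((A-\lambda\id)^m\bigr)$ exactly when $r\ge p_j+1-m$, while $T_{i,j'}(v_j)=0$ as soon as $i+j'>p_j$ by \eqref{eq:zero}. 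Combining these two index constraints forces the restriction to vanish identically unless $p_j+1=m$, in which case the only surviving entry is $\hat{Q}^\lambda_m(v_j,v_j)=T_{k,k-1}(v_j)=\s_j$ (for $m=2k$) or $\hat{Q}^\lambda_m(v_j,v_j)=i\,T_{k,k}(v_j)=\s_j$ (for $m=2k+1$), matching the expressions for $\s_j$ recalled at the end of each subsection.

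Finally I would assemble the pieces: each block with $p_j+1=m$ contributes the rank-one Hermitian form $\operatorname{diag}(\s_j,0,\dots,0)$, of signature $+1$ if $\s_j=1$ and $-1$ if $\s_j=-1$, whereas every other block contributes nothing; and such a block has dimension $2(p_j+1)=2m$. Hence the number of positive (resp.\ negative) eigenvalues of $\hat{Q}^\lambda_m$ equals the number of $\s_j=+1$ (resp.\ $\s_j=-1$) among the blocks of dimension $2m$, which is the assertion. I expect the main obstacle to be precisely the index bookkeeping in the local computation: one must verify that the support condition $r,r'\ge p_j+1-m$ and the vanishing condition $r+r'\le p_j-2k+1$ (resp.\ $r+r'\le p_j-2k$) can hold simultaneously only when $p_j+1=m$ and $r=r'=0$, which is exactly the counting already performed in the symmetric case of Proposition~\ref{sumd}, now carried out with the $T_{i,j}$ in place of the real quantities $\Omega\bigl((A-\lambda\id)^{\bullet}v,(A-\lambda\id)^{\bullet}w\bigr)$.
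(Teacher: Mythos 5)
Your proposal is correct and follows essentially the same route as the paper's proof: restrict $\hat{Q}^\lambda_m$ to the symplectically orthogonal cyclic blocks, observe that $\Ker\bigl((A-\lambda\id)^m\bigr)\cap E^{v_j}_\lambda$ is spanned by the $a_r$ with $r\ge p_j+1-m$ while \eqref{eq:zero} kills every entry with $m+r+r'-1>p_j$, so only the blocks with $p_j=m-1$ contribute, each a rank-one form of value $\s_j$. Your index bookkeeping and the identifications $\hat{Q}^\lambda_{2k}(v_j,v_j)=T_{k,k-1}(v_j)=\s_j$ and $\hat{Q}^\lambda_{2k+1}(v_j,v_j)=iT_{k,k}(v_j)=\s_j$ all check out against the normalizations fixed in the construction.
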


\begin{proof}
 On the intersection of  $ \Ker\bigl( (A-\lambda\id)^{m}\bigr)$
with one of the symplectically orthogonal   subspaces  $E^v_\lambda\oplus E^{\overline{v}}_{\overline{\lambda}}$
constructed above  from a $v$ such that $(A-\lambda\id)^p v\neq 0$ and $(A-\lambda\id)^{p+1}v= 0$,
 the form $\hat{Q}^\lambda_m$ vanishes identically, except if $p=m-1$ and the only non vanishing component  is
$\hat{Q}^\lambda_m(v,v)=\s$.\\
 Indeed, $\Ker\bigl( (A-\lambda\id)^{m}\bigr) \cap E_\lambda^v$ is spanned by 
 \[
 \{ (A-\lambda\id)^{r} v\, ;\, r\ge 0 \textrm{ and } r+m>p\, \},
 \]
  and
$\hat{Q}^\lambda_m\bigl((A-\lambda\id)^{r} v, (A-\lambda\id)^{r'} v\bigr)=0$ when $m+r+r'-1>p$  
so the only non vanishing cases arise when 
$r=r'=0$ and $m=p+1$ so for $\hat{Q}^\lambda_m(v,v)$.  This  is equal to
$\frac{1}{ \lambda} \Omega\bigl((A-\lambda\id)^{k}v,(A-\overline{\lambda}\id)^{k-1}\overline{v}\bigr)=\frac{1}{ \lambda} \lambda \s=\s$ if $m=2k$,
and to $ i\, \Omega\bigl((A-\lambda\id)^{k}v,(A-\overline{\lambda}\id)^{k}\overline{v}\bigr)=i (-i\s)=\s$ if $m=2k+1$.
\end{proof}
The numbers $s_j$ appearing in the decomposition are thus invariant of the matrix.
\begin{cor}\label{cor:unS1}
The normal decomposition described in theorem \ref{normalforms3} is unique up to a permutation of the blocks
when the eigenvalue $\lambda$ has been chosen in $\{ \lambda,\overline{\lambda}\}$, for instance by specifyng
that its imaginary part is positive. It is completely determined by this chosen $\lambda$, by the dimension 
$\dim_\C\bigl( \Ker (A-\lambda\Id)^r\bigr)$ for each $r\ge 1$ and by the rank and the signature of the
Hermitian bilinear $2$-forms  $\hat{Q}^\lambda_m$ for each $m\ge 1$.  \hfill{$\square$}
\end{cor}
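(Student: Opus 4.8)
The plan is to prove that the multiset of elementary blocks produced in Theorem~\ref{normalforms3} is a function of the listed invariants alone; uniqueness up to permutation and determination by those invariants then follow together. Each block of the symplectic direct sum is labelled by a single integer $m\ge1$ — the common size $m$ of the two conjugate Jordan blocks (on $E_\lambda$ and on $E_{\overline\lambda}$) that it carries, so the real block itself has size $2m\times2m$ — together with a sign $\s_j\in\{\pm1\}$. The two families of the theorem correspond to $m$ even ($m=2k_j$, the $4k_j\times4k_j$ blocks) and to $m$ odd ($m=2k_j+1$, the $(4k_j+2)\times(4k_j+2)$ blocks). Hence the whole decomposition is determined once, for every $m$, one knows the number $N_m^{+}$ of size-$2m$ blocks with sign $+1$ and the number $N_m^{-}$ with sign $-1$.

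First I would fix the total number $N_m:=N_m^{+}+N_m^{-}$ of blocks of each size. Since $E^v_\lambda$ is a cyclic subspace of dimension $m=p+1$ on which $A$ acts as a single Jordan block, and distinct blocks contribute distinct Jordan blocks to $E_\lambda$, the integer $N_m$ equals the number of Jordan blocks of size exactly $m$ of $A\vert_{E_\lambda}$. The standard Jordan count then gives $N_m=2d_m-d_{m-1}-d_{m+1}$, where $d_r:=\dim_\C\Ker(A-\lambda\Id)^r$ and $d_0:=0$, so $N_m$ is read off from the dimension data.

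Next I would split $N_m$ by sign using Proposition~\ref{sumds}, which asserts precisely that $N_m^{+}$ (resp. $N_m^{-}$) is the number of positive (resp. negative) eigenvalues of the Hermitian form $\hat{Q}^\lambda_m$; equivalently $N_m^{+}=\tfrac12(r_m+\sigma_m)$ and $N_m^{-}=\tfrac12(r_m-\sigma_m)$, with $r_m$ and $\sigma_m$ the rank and signature of $\hat{Q}^\lambda_m$. This is consistent with the previous step, since the proof of Proposition~\ref{sumds} exhibits $\hat{Q}^\lambda_m$ as block-diagonal, of rank one and signature $\s$ on each size-$m$ block and identically zero on the blocks of other sizes, whence $r_m=N_m$. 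Therefore every $N_m^{\pm}$, and with it the entire multiset of blocks, is a function of $\lambda$, of the dimensions $d_r$, and of the ranks and signatures of the forms $\hat{Q}^\lambda_m$.

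To conclude I would note that all these data are intrinsic to $A$ and independent of the choices made in the construction, so the block multiset is well defined: the decomposition is unique up to a permutation of its blocks and is completely determined by the stated invariants. The only true ambiguity is the choice of $\lambda$ inside $[\lambda]=\{\lambda,\overline\lambda\}$ (on $S^1$ one has $\tfrac1\lambda=\overline\lambda$), removed by requiring the imaginary part of $\lambda$ to be positive; one should record, as in the text, that both $\s$ and the form $\hat{Q}^\lambda_m$ flip under $\lambda\leftrightarrow\overline\lambda$, but once $\lambda$ is fixed the count above is unambiguous. The one delicate point — that $\hat{Q}^\lambda_m$ is block-diagonal, vanishes on blocks of size $\ne m$, and is rank one of signature $\s$ on each size-$m$ block — is exactly the content of Proposition~\ref{sumds}, which I may assume; granting it, the remaining argument is bookkeeping, the sole thing to watch being the index matching (block size $2m\leftrightarrow$ Jordan size $m\leftrightarrow$ parity of $m\leftrightarrow$ the two families of Theorem~\ref{normalforms3}).
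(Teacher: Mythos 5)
Your proposal is correct and follows the same route the paper intends: the corollary is stated there as an immediate consequence of Proposition~\ref{sumds} (which pins down the signs via the signature of $\hat{Q}^\lambda_m$) together with the standard Jordan-block count $N_m=2d_m-d_{m-1}-d_{m+1}$ from the kernel dimensions; you have simply written out this bookkeeping explicitly, including the correct matching of Jordan size $m$ to real block size $2m$ and the $\lambda\leftrightarrow\overline\lambda$ ambiguity. No gaps.
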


%%%%%%%%%%%%%%%%
%%%%%%%%%%%%%%%


\begin{thebibliography}{99}

\bibitem{YLR} Y. Au-Yeung, C. Li, and L. Rodman, H--unitary and Lorentz matrices : a review, \emph{ SIAM J. Matrix Anal. Appl.}, {\bf 25} : 1140--1162, 2004.

\bibitem{GLR} I. Gohberg, P. Lancaster, and L. Rodman, \emph{Indefinite Linear Algebra and Applications}, Birkh\"auser, Basel, 2005.

\bibitem{GR} I Gohberg and B. Reichstein, On H--unitary and block--Toeplitz H--normal operators, \emph{Linear and Multilinear Algebra}, {\bf 30} : 17--48, 1991.

\bibitem{JGutt} Jean Gutt, Generalized {C}onley--{Z}ehnder index, 2013, preprint, arXiv:1307.7239.
 
\bibitem{LM74} { Alan J. Laub and Kenneth Meyer}, {Canonical forms for symplectic and {H}amiltonian matrices}, \emph{Celestial Mechanics}, {\bf 9} : 213--238, 1974.

\bibitem{LinMehrXu99}  Wen-Wei Lin,  Volker Mehrmann,  and Hongguo Xu, {Canonical forms for {H}amiltonian and symplectic matrices and pencils}, \emph{Linear Algebra Appl.}, 302/303, 469--533, 1999, {Special issue dedicated to Hans Schneider (Madison, WI, 1998)}.

\bibitem{LD00} {Yiming Long and Di Dong},  {Normal forms of symplectic matrices}, \emph{Acta Mathematica Sinica. English Series}, {\bf 2} :  237--260, 2000.

\bibitem{Long} {Yiming Long},   {Precise iteration formulae of the {M}aslov-type index theory and ellipticity of closed characteristics}, \emph{Advances in Mathematics}, {\bf 154} : 76--131, 2000.


\bibitem{Lon02}{ Yiming Long,} \emph{Index theory for symplectic paths with applications},
volume 207 of \emph{Progress in Mathematics}, {Birkh{\"a}user Verlag}, Basel, 2002.

\bibitem{Mehl} C. Mehl, On classification of polynomially normal matrices in indefinite inner product spaces, \emph{Electron. J. Linear Algebra} {\bf 15} : 50--83, 2006.	

\bibitem{Mehl2} C. Mehl, Essential decomposition of  normal matrices in real indefinite inner product spaces, \emph{Electron. J. Linear Algebra} {\bf 15} : 84--106, 2006.	


\bibitem{MulThi99}{ D. M{{\"u}}ller and C. Thiele},  {Normal forms of involutive complex {H}amiltonian matrices under the real symplectic group}, \emph{Journal f{\"u}r die Reine und Angewandte Mathematik}, {\bf 513} :  97--114, 1999.

\bibitem{Rod} L. Rodman, Similarity vs unitary similarity : Complex and real indefinite inner products, \emph{Linear Algebra Appl.}, {\bf 416} : 945--1009, 2006.

\bibitem{Serg} V. Sergeichuk, Classification problems for systems of forms and linear mappings, \emph{ Mat. USSR--Isv.}, {\bf 31} :  481--501, 1988.

\bibitem{SPen72} {Edward Spence}, {{$m$}-symplectic matrices}, \emph {Trans. Amer. Math. Soc.}, {\bf 170} :  447--457, 1972.


\bibitem{Wim91} {Harald. K. Wimmer}, {Normal forms of symplectic pencils and the discrete-time algebraic {R}iccati equation}, \emph{Linear Algebra Appl.}, {\bf 147} :  411--440, 1991.

\end{thebibliography}
\end{document}